\newtheorem{theorem}{Theorem}
\newtheorem{corollary}{Corollary}
\newtheorem{lemma}{Lemma}
\newtheorem{conjecture}{Conjecture}
\begin{document}

\title{Counting the number of isosceles triangles in rectangular regular grids}
\date{February 6, 2017}
\author{Chai Wah Wu\\ IBM T. J. Watson Research Center\\ P. O. Box 218, Yorktown Heights, New York 10598, USA\\e-mail: chaiwahwu@member.ams.org}
\maketitle

\begin{abstract}
In general graph theory, the only relationship between vertices are expressed via the edges. When the vertices are embedded in an Euclidean space, the geometric relationships between vertices and edges can be interesting objects of study.
We look at the number of isosceles triangles where the vertices are points on a regular grid and show that they satisfy a recurrence relation when the grid is large enough.  We also derive recurrence relations for the number of acute, obtuse and right isosceles triangles.
\end{abstract}

\section{Introduction}\label{sec:intro}
In general graph theory, the relationship between vertices are expressed via the edges of the graph.  In geometric graph theory, the vertices and edges have geometric attributes that are important as well.  For instance, a random geometric graph is constructed by embedding the vertices randomly in some metric space and connecting two vertices by an edge if and only if they are close enough to each other.  When the vertices lie in an Euclidean space, the edges of vertices can form geometric objects such as polygons.  In \cite{nara:polygon:2003}, the occurrence of polygons is studied. In \cite{bautista:triangles:2013} the number of nontrivial triangles is studied.  In this note, we consider this problem when the vertices are arranged on a regular grid.  The study of the abundance (or sparsity) of such subgraphs or network motifs \cite{milo:motifs:2002} is important in the characterization of complex networks.

Consider an $n$ by $k$ rectangular regular grid $G$ with $n,k\geq 2$.  A physical manifestation of this pattern, called geoboard, is useful in teaching elementary geometric concepts \cite{geoboard}. Let $3$ distinct points be chosen on the grid such that they form the vertices of a triangle with nonzero area (i.e. the points are not collinear).  
In OEIS sequences \href{http://oeis.org/A271910}{A271910}, \href{http://oeis.org/A271911}{A271911}, \href{http://oeis.org/A271912}{A271912}, \href{http://oeis.org/A271913}{A271913}, \href{http://oeis.org/A271915}{A271915} \cite{oeis}, the number of such triangles that are isosceles are listed for various $k$ and $n$.  Neil Sloane made the conjecture that for a fixed $n\geq 2$, the number of isosceles triangles in an $n$ by $k$ grid, denoted as $a_n(k)$, satisfies the recurrence relation $a_n(k) = 2a_n(k-1)-2a_n(k-3)+a_n(k-4)$ for $k > K(n)$ for some number $K(n)$.  The purpose of this note is to show that this conjecture is true and give an explicit form of $K(n)$.  In particular, we show that $K(n) = (n-1)^2+3$ if $n$ is even and $K(n) = (n-1)^2+2$ if $n$ is odd and that this is the best possible value for $K(n)$.

\section{Counting isosceles triangles} \label{sec:count}
We first start with some simple results:

\begin{lemma} \label{lem:one}
If $x$,$y$,$u$,$w$ are integers such that
$0 < x$,$u \leq n$ and $y > \frac{n^2}{2}$, then $x^2+y^2=u^2+w^2$ implies that $x = u$ and $y = w$.
\end{lemma}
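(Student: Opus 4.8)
The plan is to rewrite the hypothesis as a difference of two squares and then play an upper bound on one side against a lower bound on the other. Rearranging $x^2+y^2=u^2+w^2$ gives $w^2-y^2 = x^2-u^2$. Since $1\le x\le n$ and $1\le u\le n$, the right-hand side is an integer with $|x^2-u^2|\le n^2-1$, and it vanishes exactly when $x=u$. So everything reduces to showing that the left-hand side cannot be a nonzero integer of absolute value at most $n^2-1$.

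First I would control the size and sign of $w$. Only $w^2 = x^2+y^2-u^2$ is pinned down by the hypothesis, so I may take $w\ge 0$; the claimed conclusion $y=w$ then fixes the sign at the end. Using $x^2\ge 1$, $u^2\le n^2$ and $y>\tfrac{n^2}{2}$ (so $y^2>\tfrac{n^4}{4}$), I get
\[
w^2 = y^2 + x^2 - u^2 > \frac{n^4}{4} - n^2 + 1 = \left(\frac{n^2}{2}-1\right)^2 .
\]
Because $n\ge 2$ makes $\tfrac{n^2}{2}-1\ge 0$, this yields $w > \tfrac{n^2}{2}-1$, and hence $w+y > \left(\tfrac{n^2}{2}-1\right) + \tfrac{n^2}{2} = n^2-1$.

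The finish is a contradiction argument. Suppose $x\ne u$. Then $w\ne y$ (otherwise $w^2=y^2$ would force $x^2=u^2$), and since $w,y$ are integers, $|w-y|\ge 1$. Factoring the difference of squares,
\[
|x^2-u^2| = |w^2-y^2| = |w-y|\,(w+y) \ge w+y > n^2-1,
\]
which contradicts $|x^2-u^2|\le n^2-1$. Therefore $x=u$, and then $w^2=y^2$ together with $y>0$ gives $w=y$. The only subtle points are fixing the sign of $w$ and being scrupulous about strict versus non-strict inequalities---the single strict inequality $w+y>n^2-1$ is exactly what collides with the non-strict bound $|x^2-u^2|\le n^2-1$---and the lower bound on $w$ is the step that genuinely uses $n\ge 2$.
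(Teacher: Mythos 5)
Your proof is correct and is essentially the paper's argument: both derive a contradiction by noting that $x^2+y^2=u^2+w^2$ forces $|y^2-w^2|=|x^2-u^2|\le n^2-1$, while $y>\tfrac{n^2}{2}$ forces $|y^2-w^2|$ to exceed $n^2-1$ as soon as $y\ne w$. The only differences are cosmetic: the paper gets its lower bound in one step from the consecutive-square gap $|y^2-w^2|\ge 2y-1$ without ever bounding $w$, whereas you detour through the estimate $w>\tfrac{n^2}{2}-1$ before factoring $|w-y|(w+y)$; and your explicit normalization $w\ge 0$ cleanly handles a sign issue (the statement fails literally for $w=-y$) that the paper's proof leaves implicit.
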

\begin{proof} If $y\neq w$, then
$|y^2-w^2| \geq 2y-1 > n^2-1$.  
On the other hand, $|y^2-w^2| = |u^2-x^2| < n^2$, a contradiction.
\end{proof}

\begin{lemma} \label{lem:two}
If $x$,$y$,$u$,$w$ are integers such that
$0\leq x$,$u \leq n$ and $y > \frac{n^2+1}{2}$, then $x^2+y^2=u^2+w^2$ implies that $x = u$ and $y = w$.
\end{lemma}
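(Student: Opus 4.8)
The plan is to follow the proof of Lemma~\ref{lem:one} almost verbatim, adjusting a single inequality to accommodate the weaker hypothesis $0 \le x,u$ (rather than $0 < x,u$). Arguing by contradiction, I would first suppose $y \neq w$. Since $x,y,u,w$ are integers and, as in Lemma~\ref{lem:one}, $w$ may be taken nonnegative, the elementary estimate $|y^2 - w^2| = |y-w|\,|y+w| \ge 2y-1$ holds: whichever of $w \le y-1$ or $w \ge y+1$ occurs, the product of the two integer factors is at least $2y-1$.

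The one place the argument must change is the bound on the right-hand side. In Lemma~\ref{lem:one} the condition $0 < x,u \le n$ forces $x^2,u^2 \ge 1$, so $|u^2 - x^2| \le n^2 - 1 < n^2$; together with $y > \frac{n^2}{2}$ this is enough. Here, allowing $x = 0$ or $u = 0$ only gives $0 \le x^2,u^2 \le n^2$, hence the weaker bound $|u^2 - x^2| \le n^2$, with equality genuinely possible (for instance $x = 0$, $u = n$). To recover the contradiction I would invoke the strengthened hypothesis $y > \frac{n^2+1}{2}$, which is precisely what makes $2y-1 > n^2$ strictly. The relation $x^2 + y^2 = u^2 + w^2$ then yields $n^2 < 2y-1 \le |y^2 - w^2| = |u^2 - x^2| \le n^2$, a contradiction. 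Hence $y = w$, and substituting back leaves $x^2 = u^2$; since $x,u \ge 0$ this forces $x = u$.

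I expect no serious obstacle: the entire content is bookkeeping on strict versus non-strict inequalities. Relaxing $x,u$ to be possibly zero costs exactly one unit in the bound on $|u^2 - x^2|$ (from $n^2-1$ up to $n^2$), and that unit is bought back by replacing $y > \frac{n^2}{2}$ with $y > \frac{n^2+1}{2}$. The only check worth making carefully is the boundary case $x = 0$, $u = n$ (or vice versa), which confirms that the hypothesis on $y$ is exactly tight for this line of argument.
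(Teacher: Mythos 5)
Your proposal is correct and follows essentially the same argument as the paper: assume $y \neq w$, derive $|y^2 - w^2| \ge 2y - 1 > n^2$ from the strengthened hypothesis on $y$, and contradict $|y^2 - w^2| = |u^2 - x^2| \le n^2$, the non-strict bound being exactly where allowing $x = 0$ or $u = 0$ changes Lemma~\ref{lem:one}. Your closing remark that $x^2 = u^2$ with $x, u \ge 0$ forces $x = u$ is a trivial step the paper leaves implicit, and your tightness check ($x=0$, $u=n$) is a nice confirmation that the hypothesis on $y$ cannot be weakened by this method.
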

\begin{proof} If $y\neq w$, then
$|y^2-w^2| \geq 2y-1 > n^2$.  
On the other hand, $|y^2-w^2| = |u^2-x^2| \leq n^2$, a contradiction.
\end{proof}

\begin{lemma} \label{lem:three}
\[ 2\sum_{m=1}^{\lfloor\frac{n-1}{2}\rfloor} n-2m = \left\lfloor \frac{(n-1)^2}{2}\right\rfloor\]  
\end{lemma}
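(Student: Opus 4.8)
The plan is to evaluate the inner sum in closed form and then dispose of the floor function by splitting into the two parity cases for $n$. Writing $M = \lfloor (n-1)/2\rfloor$ for the upper limit, I would first recognize $\sum_{m=1}^{M}(n-2m)$ as an arithmetic series: since $\sum_{m=1}^{M} n = nM$ and $\sum_{m=1}^{M} 2m = M(M+1)$, the sum collapses to $nM - M(M+1) = M(n-M-1)$. Hence the left-hand side of the claimed identity is exactly $2M(n-M-1)$, and the whole problem reduces to checking that this quantity equals $\lfloor (n-1)^2/2\rfloor$.

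The key step is then a short case analysis on the parity of $n$, which is where the floor on the right-hand side gets resolved. If $n$ is odd, write $n = 2p+1$, so that $n-1 = 2p$ is even and $M = p$; substituting gives $n-M-1 = p$ and therefore $2M(n-M-1) = 2p^2$. Here $(n-1)^2/2 = (2p)^2/2 = 2p^2$ is already an integer, so the floor is $2p^2$ and the two sides agree. If $n$ is even, write $n = 2p$, so that $n-1 = 2p-1$ is odd and $M = \lfloor (2p-1)/2\rfloor = p-1$; now $n-M-1 = p$ and $2M(n-M-1) = 2p(p-1) = 2p^2 - 2p$. On the other side, $(n-1)^2/2 = (2p-1)^2/2 = 2p^2 - 2p + \tfrac12$, whose floor is precisely $2p^2 - 2p$, matching the left-hand side again.

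There is no serious obstacle here; the only point that requires care is computing $M = \lfloor (n-1)/2\rfloor$ correctly in each parity case (it is $p$ when $n=2p+1$ but $p-1$ when $n=2p$), and tracking which of the two expressions carries the leftover $\tfrac12$ that the floor discards. Both cases come out to the stated value, completing the verification. I would present it as the closed-form reduction to $2M(n-M-1)$ followed by the two-line even/odd check rather than attempting a unified formula, since the floor makes a single-case argument more awkward than the split.
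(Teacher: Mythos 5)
Your proof is correct and follows essentially the same route as the paper: evaluate the arithmetic series in closed form and resolve the floor by splitting into the cases $n$ odd and $n$ even. The only cosmetic difference is that you first reduce to $2M(n-M-1)$ with $M = \lfloor (n-1)/2 \rfloor$ before substituting the parity, whereas the paper substitutes the explicit upper limit in each case directly.
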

\begin{proof}
If $n$ is odd, then 
\[ 2\sum_{m=1}^{\lfloor\frac{n-1}{2}\rfloor} n-2m = 2\sum_{m=1}^{\frac{n-1}{2}} n-2m = 
n(n-1) - 2\frac{n-1}{2}\frac{n+1}{2} = \frac{(n-1)^2}{2} \]
If $n$ is even, then
\[ 2\sum_{m=1}^{\lfloor\frac{n-1}{2}\rfloor} n-2m = 2\sum_{m=1}^{\frac{n-2}{2}} n-2m = 
n(n-2) -  2\frac{n-2}{2}\frac{n}{2}  = \frac{(n-1)^2-1}{2} = \left\lfloor \frac{(n-1)^2}{2}\right\rfloor \]
\end{proof}

Our main result is the following:
\begin{theorem} \label{thm:main}
Let $a_n(k)$ be the number of isosceles triangles of nonzero area formed by 3 distinct points in an $n$ by $k$ grid.  Then
$a_n(k) = 2a_n(k-1)-2a_n(k-3)+a_n(k-4)$ for $k > (n-1)^2+3$.
\end{theorem}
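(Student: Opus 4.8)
The plan is to fix $n$ and study $a_n(k)$ as a function of the number of rows $k$, exploiting vertical translation invariance. Place the grid at the points $(x,y)$ with $0\le x\le n-1$ and $0\le y\le k-1$, so that the width direction is fixed and $k$ grows. For $h\ge 1$ let $c_n(h)$ denote the number of isosceles triangles of nonzero area lying in the infinite strip $0\le x\le n-1$ whose vertical extent $y_{\max}-y_{\min}$ equals $h$ and whose lowest vertex lies on $y=0$, counted over all horizontal placements. Every isosceles triangle of extent $h$ in the $n$ by $k$ grid is a vertical translate of exactly one such normalized triangle, and a shape of extent $h$ admits $k-h$ vertical positions. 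Hence
\[ a_n(k)=\sum_{h=1}^{k-1}(k-h)\,c_n(h). \]
First I would telescope this in $k$: one checks $a_n(k)-a_n(k-1)=\sum_{h=1}^{k-1}c_n(h)$ and therefore
\[ a_n(k)-2a_n(k-1)+a_n(k-2)=c_n(k-1). \]

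Next I would reduce the recurrence to a statement about $c_n$. Subtracting the last identity at $k$ from the same identity at $k-2$ gives
\[ a_n(k)-2a_n(k-1)+2a_n(k-3)-a_n(k-4)=c_n(k-1)-c_n(k-3), \]
which is exactly the claimed recurrence precisely when the right-hand side vanishes. Thus the theorem reduces to proving $c_n(k-1)=c_n(k-3)$; since $k-1$ and $k-3$ have the same parity, it suffices to show that $c_n(h)$ depends only on the parity of $h$ once $h>(n-1)^2$. The smaller index appearing is $k-3$, so the hypothesis $k-3>(n-1)^2$, i.e. $k>(n-1)^2+3$, is exactly what is required.

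The heart of the argument is a geometric classification of isosceles triangles of large vertical extent. I would order the three vertices by height, $y_1\le y_2\le y_3$ with $y_3-y_1=h$, set $a=y_2-y_1$ and $b=y_3-y_2$ so that $a+b=h$, and note every horizontal displacement is at most $n-1$ in absolute value. Comparing the three squared side lengths, if the two equal sides are the non-spanning ones then $a^2-b^2$ equals a difference of two squares each at most $(n-1)^2$; since $|a^2-b^2|=|a-b|\,h\ge h$ when $a\ne b$, the case $a\ne b$ forces $h\le(n-1)^2$, so for $h>(n-1)^2$ only $a=b=h/2$ (hence $h$ even) survives, together with a horizontal matching condition — the configuration with apex at mid-height over a vertical base. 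If instead an equal pair involves the spanning side, the estimate $h^2-a^2\ge 2h-1>(n-1)^2$ forces $a\in\{0,h\}$, i.e. a horizontal base edge at the top or bottom with the apex at the opposite extreme. Each surviving type is counted by a condition on horizontal coordinates that is independent of $h$ (the mid-height type occurring only for even $h$), so for $h>(n-1)^2$ the quantity $c_n(h)$ is one constant on even $h$ and another on odd $h$, the two differing by exactly the mid-height count. I would invoke Lemma~\ref{lem:three} to put the horizontal tallies into closed form.

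I expect the main obstacle to be this classification together with the sharp bookkeeping of the threshold: one must verify that no exceptional isosceles triangle of extent $h>(n-1)^2$ survives, applying the difference-of-squares inequalities (in the spirit of Lemmas~\ref{lem:one} and~\ref{lem:two}) without losing slack. Indeed exceptional triangles persist up to $h=(n-1)^2$ — for instance with $a-b=1$, one horizontal displacement equal to $n-1$ and the other $0$ — which is why the comparison must reach down to $k-3>(n-1)^2$, and why the genuinely sharp threshold should depend on the parity of $(n-1)^2$, hence of $n$, as anticipated in the introduction. The remaining steps, namely the telescoping identities and the disjointness of the three surviving types (no grid triangle is equilateral, so none is double-counted), I expect to be routine.
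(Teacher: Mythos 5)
Your proposal is correct and is essentially the paper's own argument in transposed coordinates: your identity $a_n(k)-2a_n(k-1)+a_n(k-2)=c_n(k-1)$ is the paper's telescoping through $b_n(k)=a_n(k)-a_n(k-1)$ and $b_n(k)=b_n(k-1)+c_n(k)$, and your two geometric cases (equal non-spanning sides forcing the mid-height apex over a vertical base, occurring only for even extent; an equal pair involving the spanning side forcing a horizontal edge at an extreme with centered apex, counted via Lemma~\ref{lem:three}) are exactly the paper's case~1 of $B(k)$ and the set $A(k)$, with your inline difference-of-squares estimates playing the role of Lemmas~\ref{lem:one} and~\ref{lem:two}. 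Your reduction to $c_n(k-1)=c_n(k-3)$ for $k-3>(n-1)^2$ is the paper's step $c_n(k)=c_n(k-2)$ under the same threshold, so the two proofs coincide in substance.
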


\begin{proof}
for $k > 2$, let the $n$ by $k$ array be decomposed into $3$ parts consisting of the first column, the middle part (of size $n$ by $k-2$) and the last column denoted as $p_1$, $p_2$ and $p_3$ respectively.

Let $b_n(k)$ be the number of isosceles triangles in the $n$ by $k$ array with vertices in the last column $p_3$ and
let $c_n(k)$ be the number of isosceles triangles in the $n$ by $k$ array with vertices in the first and last columns $p_1$ and $p_3$.
It is clear that $b_n(k) = a_n(k)-a_n(k-1)$.
Furthermore, $b_n(k) = b_n(k-1) + c_n(k)$. 
Let us partition the isosceles triangles corresponding to $c_n(k)$ into $2$ groups, $A(k)$ and $B(k)$ where $A(k)$ are triangles with all $3$ vertices in $p_1$ or $p_3$ and $B(k)$ are triangles with a vertex in each of $p_1$, $p_2$ and $p_3$. 
Since $k > n$, all triangles in $A(k)$ must be of the 
form where the two vertices in $p_1$ (resp. $p_3$) are an even number of rows apart and the third vertex is in $p_3$ (resp. $p_1$)
in the middle row between them.  Since $k > (n-1)^2+1$, these triangles are all acute (we'll revisit this later).  Let us count how many such triangles there are.  There are $n-2m$ pairs of vertices which are $2m$ rows apart, for $1\leq m\leq \lfloor\frac{n-1}{2}\rfloor$.  Thus the total number of triangles in $A(k)$ is
\[ 2\sum_{m=1}^{\lfloor\frac{n-1}{2}\rfloor} n-2m = \left\lfloor \frac{(n-1)^2}{2}\right\rfloor\]  
by Lemma \ref{lem:three}.
Next we consider the isosceles triangles in $B(k)$.
Let $e_1$ be the edge between the vertex in $p_1$ and the vertex in $p_2$ and $e_2$ be the edge between the vertex in $p_2$ and the vertex in $p_3$ and $e_3$ be the edge between the vertices in $p_1$ and $p_3$.  There are 2 cases.  In case 1, the length of $e_1$ is equal to the length of $e_2$ and is expressed as $x^2+y^2=u^2+w^2$ with $0\leq x, u \leq n-1$ and $y+w=k-1$.  Without loss of generality, we pick $y$ to be the larger of $y$ and $w$, i.e., $y \geq \frac{k}{2}$.
This is illustrated in Fig. \ref{fig:triangle}.

\begin{figure}[htbp]
\centerline{\includegraphics{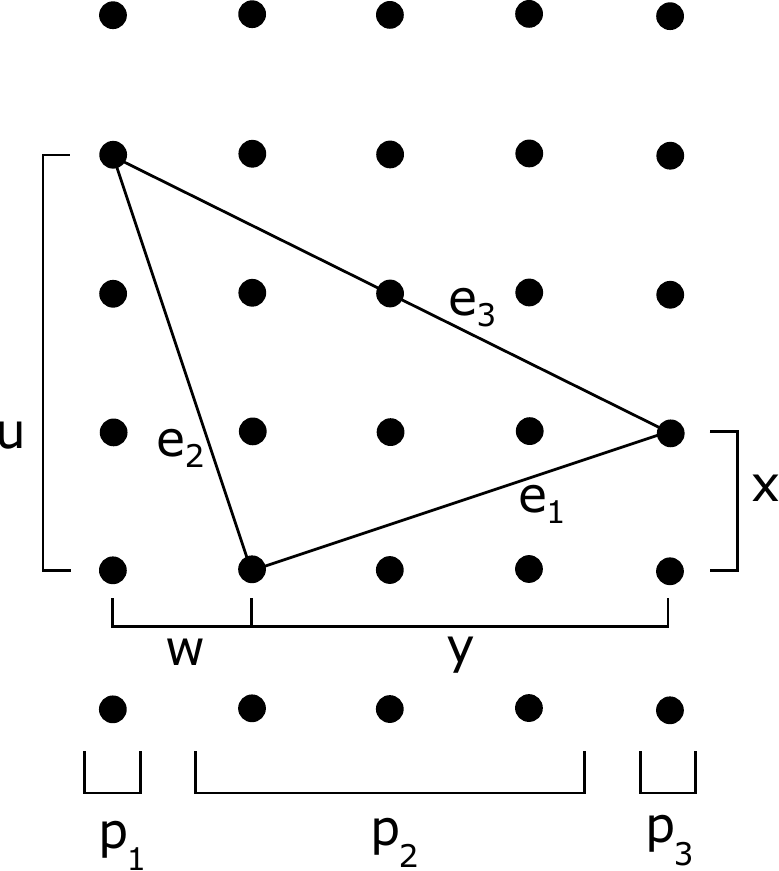}}
\caption{Illustrating a triangle in $B(k)$.}\label{fig:triangle}
\end{figure}

Since $k > (n-1)^2+1$, Lemma \ref{lem:two} implies that $x = u$ and $y = w$ and this is only possible if $k$ is odd and the vertex in $p_1$ and in $p_3$ must be at the same row and $p_2$ in a different row.  Note that such triangles are right triangles for $n=2$ and obtuse for $n > 2$.  In case 2, the length of $e_3$ is equal to the length of either $e_1$ or $e_2$.  Lemma \ref{lem:two} shows that in this case $y = k-1$ which is not possible.  This implies that $c_n(k) = P(n,2) + \lfloor \frac{(n-1)^2}{2}\rfloor= n(n-1) +  \lfloor \frac{(n-1)^2}{2}\rfloor$ if $k$ is odd and $c_n(k) =  \lfloor \frac{(n-1)^2}{2}\rfloor$ if $k$ is even.
Thus we have $a_n(k) = a_n(k-1)+b_n(k) = a_n(k-1)+b_n(k-1)+c_n(k) = a_n(k-1)+a_n(k-1)-a_n(k-2)+c_n(k) = 2a_n(k-1)-a_n(k-2)+c_n(k)$.
Since $k-2 > n$ and $k-2 > (n-1)^2 +1$, we can apply the same analysis to find that $c_n(k-2) = c_n(k)$.
Since $a_n(k-2) = 2a_n(k-3)-a_n(k-4)+c_n(k-2)$, we get $a_n(k) = 2a_n(k-1)-2a_n(k-3)+a_n(k-4)-c_n(k-2)+c_n(k) = 2a_n(k-1)-2a_n(k-3)+a_n(k-4)$.
\end{proof}

It is clear that $a_n(k) = a_k(n)$.
The above argument also shows that:
\begin{theorem} \label{thm:main2}
Suppose that $ k > (n-1)^2+1$.  Then
$a_n(k) = 2a_n(k-1)-a_n(k-2)+n(n-1) + \lfloor \frac{(n-1)^2}{2}\rfloor$ if $k$ is odd and $a_n(k) = 2a_n(k-1)-a_n(k-2) + \lfloor \frac{(n-1)^2}{2}\rfloor$ if $k$ is even.  
\end{theorem}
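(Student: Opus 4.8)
The plan is to reuse the column decomposition already set up in the proof of Theorem~\ref{thm:main}, but to stop one step earlier, which is exactly why the weaker hypothesis $k > (n-1)^2+1$ suffices. As there, I would split the $n$ by $k$ grid into its first column $p_1$, its middle block $p_2$ of size $n$ by $k-2$, and its last column $p_3$, and retain the two auxiliary counts: $b_n(k)$, the number of isosceles triangles having at least one vertex in $p_3$, and $c_n(k)$, the number having vertices in both $p_1$ and $p_3$. The identity $b_n(k) = a_n(k) - a_n(k-1)$ is immediate (triangles avoiding the last column are exactly those in the $n$ by $k-1$ grid), and a translation argument on columns $2,\dots,k$ gives $b_n(k) = b_n(k-1) + c_n(k)$. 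Combining these yields the skeleton recurrence
\[ a_n(k) = a_n(k-1) + b_n(k) = a_n(k-1) + b_n(k-1) + c_n(k) = 2a_n(k-1) - a_n(k-2) + c_n(k), \]
valid for all $k > 2$. Thus the whole theorem reduces to evaluating $c_n(k)$ in closed form under the hypothesis $k > (n-1)^2+1$.

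The second step, the heart of the matter, is to show $c_n(k) = n(n-1) + \lfloor (n-1)^2/2 \rfloor$ for $k$ odd and $c_n(k) = \lfloor (n-1)^2/2 \rfloor$ for $k$ even. I would partition the triangles counted by $c_n(k)$ into $A(k)$, with all three vertices in the two outer columns, and $B(k)$, with exactly one vertex in each column. For $A(k)$, since $k > n$ a vertical base side can never match a slanted side, so the apex must lie at the midpoint row of the two base vertices; counting the $n-2m$ pairs at even gap $2m$ and applying Lemma~\ref{lem:three} gives $|A(k)| = \lfloor (n-1)^2/2 \rfloor$, independent of parity. For $B(k)$ I would write the two outer-to-middle legs as $x^2+y^2 = u^2 + w^2$, with vertical parts $0 \le x,u \le n-1$ and horizontal parts $y+w = k-1$. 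Taking $y$ to be the larger horizontal part, if $y > w$ then $y > (k-1)/2 \ge ((n-1)^2+1)/2$, so Lemma~\ref{lem:two} with parameter $n-1$ forces $y = w$, a contradiction; hence $y = w$, which requires $k$ odd and places the middle vertex in the central column. The equation then collapses to $x = u$, whose only non-degenerate solutions put the $p_1$- and $p_3$-vertices in a common row with the middle vertex elsewhere, giving exactly $n(n-1)$ triangles; the arithmetic-progression solution is collinear and discarded. The remaining possibility, that the long side $e_3$ equals a leg, is excluded because the horizontal part of $e_3$ is the maximal $k-1$, and Lemma~\ref{lem:two} would then force the leg's horizontal part to equal $k-1$ too, impossible for an interior middle vertex.

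Substituting these two values of $c_n(k)$ into the skeleton recurrence gives the two displayed formulas. The only place where care is genuinely required is the bookkeeping for $B(k)$: one must keep the strict-versus-nonstrict inequality in Lemma~\ref{lem:two} straight when ruling out $y > w$, exclude the collinear $x=u$ configuration, and verify that each surviving triangle is counted once so that the multiplicity is $n(n-1)$. I expect this parity-and-degeneracy analysis of $B(k)$ to be the main obstacle; the rest is algebra. Finally, I would note that the hypothesis can be relaxed from $k > (n-1)^2+3$ in Theorem~\ref{thm:main} to $k > (n-1)^2+1$ here precisely because we never need to re-run the count at $k-2$ to eliminate $c_n(k)$; we only evaluate it once, at $k$.
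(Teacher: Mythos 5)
Your proposal is correct and takes essentially the same route as the paper: Theorem~\ref{thm:main2} is stated there as an immediate byproduct of the proof of Theorem~\ref{thm:main}, namely the skeleton recurrence $a_n(k)=2a_n(k-1)-a_n(k-2)+c_n(k)$ together with the evaluation $c_n(k)=n(n-1)+\lfloor (n-1)^2/2\rfloor$ ($k$ odd) or $\lfloor (n-1)^2/2\rfloor$ ($k$ even) via the $A(k)$/$B(k)$ split, Lemma~\ref{lem:three}, and Lemma~\ref{lem:two}. Your parity and degeneracy bookkeeping for $B(k)$ (forcing $y=w$, hence $k$ odd and the $n(n-1)$ count, and ruling out the $e_3$ case) reproduces the paper's analysis exactly, including the observation that only a single evaluation of $c_n(k)$ is needed, which is why $k>(n-1)^2+1$ suffices here.
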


\section{Obtuse, acute and right isosceles}
As noted above, the triangles in $A(k)$ are acute and the triangles in $B(k)$ are right for $n=2$ and obtuse for $n>2$.
This implies that Theorem \ref{thm:main} is also true when restricted to the set of acute isosceles triangles and restricted obtuse or right isosceles triangles if $n>2$.
As for Theorem \ref{thm:main2}, we have the following similar results:

\begin{theorem} \label{thm:main2-acute}
Let $a^a_n(k)$ be the number of acute isosceles triangles of nonzero area formed by 3 distinct points in an $n$ by $k$ grid.
Then
$a^a_n(k) = 2a^a_n(k-1)-a^a_n(k-2) + \lfloor \frac{(n-1)^2}{2}\rfloor$ for $ k > (n-1)^2+1$.
\end{theorem}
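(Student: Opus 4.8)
The plan is to run the same three-column decomposition used in the proofs of Theorems \ref{thm:main} and \ref{thm:main2}, but to track only the acute isosceles triangles throughout. Write $b^a_n(k)$ for the number of acute isosceles triangles in the $n$ by $k$ grid having a vertex in the last column $p_3$, and $c^a_n(k)$ for the number of acute isosceles triangles having vertices in both $p_1$ and $p_3$. Since acuteness is an intrinsic property of a single triangle and is preserved by the horizontal translation that identifies columns $2,\dots,k$ with an $n$ by $(k-1)$ grid, exactly the same bookkeeping that gave $b_n(k) = a_n(k)-a_n(k-1)$ and $b_n(k)=b_n(k-1)+c_n(k)$ yields $b^a_n(k) = a^a_n(k)-a^a_n(k-1)$ and $b^a_n(k)=b^a_n(k-1)+c^a_n(k)$.

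The crucial step is to evaluate $c^a_n(k)$. Recall from the proof of Theorem \ref{thm:main} that the triangles counted by $c_n(k)$ split into the family $A(k)$ (all three vertices in $p_1$ or $p_3$) and the family $B(k)$ (one vertex in each of $p_1$, $p_2$, $p_3$). For $k>(n-1)^2+1$ the triangles in $A(k)$ are acute, while those in $B(k)$ are right when $n=2$ and obtuse when $n>2$; in either case no triangle of $B(k)$ is acute. Hence the acute members of $c_n(k)$ are precisely the triangles of $A(k)$, so $c^a_n(k)=|A(k)|=\lfloor\frac{(n-1)^2}{2}\rfloor$ by Lemma \ref{lem:three}. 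The point worth emphasizing — and the reason the acute recurrence is cleaner than the one in Theorem \ref{thm:main2} — is that the parity-dependent contribution $n(n-1)$ comes entirely from $B(k)$, which is discarded here; thus $c^a_n(k)$ is independent of the parity of $k$.

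With this value in hand the recurrence assembles exactly as before: $a^a_n(k) = a^a_n(k-1)+b^a_n(k) = a^a_n(k-1)+b^a_n(k-1)+c^a_n(k) = 2a^a_n(k-1)-a^a_n(k-2)+c^a_n(k)$, and substituting $c^a_n(k)=\lfloor\frac{(n-1)^2}{2}\rfloor$ gives the claimed formula for all $k>(n-1)^2+1$. I expect the only real obstacle to be the justification that the two structural identities for $b^a_n$ survive the restriction to acute triangles; once one checks that the column-shift bijection maps acute triangles to acute triangles (which it does, being an isometry of the plane), everything else is a transcription of the earlier argument.
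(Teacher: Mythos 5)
Your proposal is correct and follows essentially the same route as the paper, which derives this theorem from the proof of Theorem \ref{thm:main} by observing that the triangles in $A(k)$ are acute while those in $B(k)$ are never acute, so that $c^a_n(k)=\lfloor\frac{(n-1)^2}{2}\rfloor$ independently of the parity of $k$, after which the bookkeeping identities for $b^a_n$ assemble the inhomogeneous recurrence exactly as in Theorem \ref{thm:main2}. (Your parenthetical that $B(k)$ triangles are right for $n=2$ echoes the paper but is only accurate at $k=3$ --- for odd $k\geq 5$ they are obtuse even when $n=2$ --- though this is immaterial here since all that matters is that they are not acute.)
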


\begin{theorem} \label{thm:main2-obtuse}
Let $a^o_n(k)$ be the number of obtuse isosceles triangles of nonzero area formed by 3 distinct points in an $n$ by $k$ grid. Suppose $ k > \max(3,(n-1)^2+1)$.
Then $a^o_n(k) = 2a^o_n(k-1)-a^o_n(k-2)+n(n-1) $ if $k$ is odd and $a^o_n(k) = 2a^o_n(k-1)-a^o_n(k-2) $ if $k$ is even.  
\end{theorem}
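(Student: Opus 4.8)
The plan is to mirror the decomposition and recurrence used in the proofs of Theorem \ref{thm:main} and Theorem \ref{thm:main2}, but to track only the obtuse isosceles triangles. First I would split the $n$ by $k$ grid into the first column $p_1$, the middle block $p_2$ of size $n$ by $k-2$, and the last column $p_3$, and define $b^o_n(k)$ (resp.\ $c^o_n(k)$) to be the number of obtuse isosceles triangles of nonzero area having a vertex in $p_3$ (resp.\ having vertices in both $p_1$ and $p_3$). Deleting the last column gives $b^o_n(k) = a^o_n(k) - a^o_n(k-1)$, and classifying the triangles counted by $b^o_n(k)$ according to whether or not they also meet $p_1$ (using translation invariance to identify those that avoid $p_1$ with the obtuse triangles counted by $b^o_n(k-1)$) yields $b^o_n(k) = b^o_n(k-1) + c^o_n(k)$. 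Combining these two identities gives $a^o_n(k) = 2a^o_n(k-1) - a^o_n(k-2) + c^o_n(k)$, so the theorem reduces to showing $c^o_n(k) = n(n-1)$ for odd $k$ and $c^o_n(k)=0$ for even $k$.

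To evaluate $c^o_n(k)$ I would reuse the analysis of $c_n(k)$ from the proof of Theorem \ref{thm:main}, which partitions the relevant triangles into the family $A(k)$ (all three vertices in $p_1$ or $p_3$) and the family $B(k)$ (one vertex in each of $p_1$, $p_2$, $p_3$). Since $k > (n-1)^2+1$, the triangles in $A(k)$ are all acute and therefore contribute nothing to $c^o_n(k)$. As established there, the only triangles in $B(k)$ are the case-1 triangles, which exist precisely when $k$ is odd and consist of an apex $p_2 = (r', \tfrac{k-1}{2})$ together with $p_1 = (r,0)$ and $p_3 = (r, k-1)$ for some $r' \neq r$; by Lemma \ref{lem:two} there are exactly $P(n,2) = n(n-1)$ of them. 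Hence $c^o_n(k) = n(n-1)$ for odd $k$ and $c^o_n(k) = 0$ for even $k$, provided every such $B(k)$ triangle is genuinely obtuse, which is the one point not already covered by the earlier proofs.

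The main (and essentially only) obstacle is therefore to verify this angle classification under the hypothesis $k > \max(3,(n-1)^2+1)$. I would compute the inner product of the two equal legs at the apex $p_2$, namely
\[ (r-r', -\tfrac{k-1}{2})\cdot(r-r', \tfrac{k-1}{2}) = (r-r')^2 - \left(\tfrac{k-1}{2}\right)^2, \]
and observe that the apex angle is obtuse exactly when this quantity is negative. Since $|r-r'| \le n-1$, it suffices to establish $(n-1)^2 < \left(\tfrac{k-1}{2}\right)^2$, i.e.\ $k > 2n-1$. The remaining step is the short verification that the stated hypothesis forces $k > 2n-1$: for $n \geq 3$ one has $k > (n-1)^2+1 \geq 2n-1$, while for $n = 2$ the term $\max(3,(n-1)^2+1) = 3 = 2n-1$ is exactly what is needed. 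This is precisely why the bound $\max(3,(n-1)^2+1)$, rather than $(n-1)^2+1$, appears in the statement, and it accounts for the $n=2$ right-triangle behavior noted earlier, which occurs only at the threshold $k=3$. With the $B(k)$ triangles confirmed obtuse, substituting the value of $c^o_n(k)$ into the recurrence $a^o_n(k) = 2a^o_n(k-1) - a^o_n(k-2) + c^o_n(k)$ completes the proof.
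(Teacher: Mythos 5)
Your proof is correct and follows essentially the same route as the paper, which never writes out a separate argument for this theorem but derives it implicitly from the $p_1$/$p_2$/$p_3$ decomposition in the proofs of Theorems \ref{thm:main} and \ref{thm:main2} together with the angle classification of the $A(k)$ and $B(k)$ families. Your explicit dot-product verification that the case-1 triangles in $B(k)$ are obtuse whenever $k > 2n-1$ is in fact more careful than the paper's own remark that these triangles are ``right triangles for $n=2$'' --- that is accurate only at the threshold $k=3$, which is precisely what the $\max(3,(n-1)^2+1)$ hypothesis excludes, so your computation correctly explains both why the $+n(n-1)$ term is present even for $n=2$ and where the $\max(3,\cdot)$ bound comes from.
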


\begin{theorem} \label{thm:main2-rectangular}
Let $a^r_n(k)$ be the number of right isosceles triangles of nonzero area formed by 3 distinct points in an $n$ by $k$ grid.
Then
 $a^r_n(k) = 2a^r_n(k-1)-a^r_n(k-2)$ for $ k > \max(3,(n-1)^2+1)$.
\end{theorem}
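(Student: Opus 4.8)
The plan is to mirror the column decomposition used in the proof of Theorem \ref{thm:main}, but restricted throughout to right isosceles triangles. I would introduce $b^r_n(k)$, the number of right isosceles triangles in the $n$ by $k$ grid having a vertex in the last column $p_3$, and $c^r_n(k)$, the number having a vertex in both the first column $p_1$ and the last column $p_3$. Because the two bookkeeping identities $b^r_n(k) = a^r_n(k) - a^r_n(k-1)$ and $b^r_n(k) = b^r_n(k-1) + c^r_n(k)$ depend only on which columns the vertices occupy (a triangle touching $p_3$ either avoids $p_1$, and so lives in the $n$ by $k-1$ subgrid, or meets both $p_1$ and $p_3$), they are inherited verbatim by any subclass of triangles, in particular the right isosceles ones.

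The heart of the argument is to show that $c^r_n(k) = 0$ whenever $k > \max(3,(n-1)^2+1)$. As in Theorem \ref{thm:main}, the triangles counted by $c_n(k)$ split into the group $A(k)$, with all three vertices in $p_1 \cup p_3$, and the group $B(k)$, with one vertex in each of $p_1$, $p_2$, $p_3$. For $k > (n-1)^2+1$ the triangles in $A(k)$ are acute, so none is right. The surviving isosceles triangles in $B(k)$ are precisely the case-1 configurations with $e_1 = e_2$ (case 2 having been excluded), whose apex lies at the $p_2$ vertex opposite the long base $e_3$. Applying the law of cosines, I would show the apex angle is strictly obtuse exactly when $(k-1)^2 > 4(i-j)^2$, where $i-j$ is the vertical offset of the middle vertex. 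Since $|i-j| \le n-1$ and $(n-1)^2 \ge 2(n-1)$ for $n \ge 3$, the bound $k-1 > (n-1)^2$ forces this inequality for all $n \ge 3$; for $n = 2$ the offset is forced to equal $1$, and the separate condition $k > 3$ is exactly what makes $(k-1)^2 > 4$, ruling out the right angle that would otherwise occur at $k=3$.

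It then remains to note that a right isosceles triangle must have its two equal sides as the legs of the right angle, since the hypotenuse is strictly the longest side. In every $B(k)$ triangle the equal sides $e_1, e_2$ meet at the apex, whose angle we have just shown to be obtuse; hence no $B(k)$ triangle is right isosceles. Combining this with the absence of right triangles in $A(k)$ gives $c^r_n(k) = A^r(k) + B^r(k) = 0$. The relation $b^r_n(k) = b^r_n(k-1) + c^r_n(k)$ therefore collapses to $b^r_n(k) = b^r_n(k-1)$, i.e. $a^r_n(k) - a^r_n(k-1) = a^r_n(k-1) - a^r_n(k-2)$, which is the asserted recurrence $a^r_n(k) = 2a^r_n(k-1) - a^r_n(k-2)$.

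I expect the main obstacle to be the geometric classification in the middle step: one must confirm simultaneously that $A(k)$ contributes no right triangles and that every isosceles $B(k)$ triangle is strictly obtuse, with careful treatment of the borderline $n=2$, $k=3$ configuration. This borderline case is exactly why the threshold is $\max(3,(n-1)^2+1)$ rather than simply $(n-1)^2+1$, and the conceptual crux is recognizing that, for a right isosceles triangle, the right angle can only occupy the apex where the two equal sides meet — so that obtuseness of that single angle genuinely excludes the entire class.
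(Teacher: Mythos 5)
Your proposal is correct and takes essentially the same route as the paper's (implicit) proof: the same three-part column decomposition, the same bookkeeping identities restricted to the right isosceles class, and the reduction to showing $c^r_n(k)=0$ via Lemma \ref{lem:two}, with $A(k)$ acute and the apex angle of every $B(k)$ triangle obtuse. Worth noting: your law-of-cosines analysis is actually sharper than the paper's own remark that the $B(k)$ triangles are ``right triangles for $n=2$'' --- they are right only at $k=3$ and obtuse for odd $k>3$ (as the paper's generating function $p^r_2$ confirms), and your careful handling of that borderline case is precisely what makes the theorem hold for $n=2$ and explains the threshold $\max(3,(n-1)^2+1)$.
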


\section{Pythagorean triples and a small improvement}
In Theorem \ref{thm:main} we have given an explicit form for the bound $K(n)$ in the conjecture described in Section \ref{sec:intro}.  We next show that for odd $n$, this bound can be reduced by $1$.
Let us consider the case $k = (n-1)^2+1$.
Consider the triangles in $B(k)$.  
Again case 2 is impossible.
For case 1, the length of $e_1$ is equal to the length of $e_2$ and is expressed as $x^2+y^2=u^2+w^2$ with $0\leq x, u \leq n-1$ and  $y\geq \frac{k}{2}$ and $y\geq w$. 
If $x > 0$, this implies that $u > 0$ and we can apply Lemma \ref{lem:one} to show that the only isosceles triangles in $B(k)$ are as in Theorem \ref{thm:main}.
Suppose $x = 0$.  We can eliminate $y=w$ since this results in $u=0$ and a collinear set of vertices.  For $n=2$, $k=2$, it is clear that $w=0$ and $y=w+1 = 1$.  For $n>2$, since $u\leq n-1$, this again means that $y = w+1$ as otherwise $y^2-w^2 \geq 4y-4 \geq 2k-4 > (n-1)^2$.  This implies that $k$ must necessarily be even  such that $a$,$b$,$c$ are integers forming a Pythagorean triple satisfying $a^2 = b^2+c^2$ where 
$a = \frac{k}{2}$, $b=a-1$ and $c = \sqrt{a^2-b^2} = \sqrt{k-1}$.  
If $n$ is odd, then $k=(n-1)^2+1$ is odd, and the case of $x=0$ cannot occur.  Thus
Theorem \ref{thm:main} can be improved to:
 
\begin{theorem} \label{thm:main-even}
Suppose $n$ is odd.  Then
$a_n(k) = 2a_n(k-1)-2a_n(k-3)+a_n(k-4)$ for $k > (n-1)^2+2$.
\end{theorem}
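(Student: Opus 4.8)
The plan is to reuse the telescoping identity behind Theorem \ref{thm:main} and to push its range of validity down by exactly one step. Recall that the recurrence $a_n(k) = 2a_n(k-1)-2a_n(k-3)+a_n(k-4)$ was obtained by combining
\[ a_n(k) = 2a_n(k-1)-a_n(k-2)+c_n(k) \quad\text{and}\quad a_n(k-2) = 2a_n(k-3)-a_n(k-4)+c_n(k-2), \]
and cancelling $c_n(k)$ against $c_n(k-2)$. The only reason Theorem \ref{thm:main} required $k > (n-1)^2+3$ rather than $k > (n-1)^2+2$ is that the closed form for $c_n$ in Theorem \ref{thm:main2} was established only for $k > (n-1)^2+1$, so the second identity above needed $k-2 > (n-1)^2+1$. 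Hence it suffices to show that, when $n$ is odd, the closed form for $c_n$ remains valid at the boundary value $k=(n-1)^2+1$.

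First I would re-examine $c_n(k)$ at $k=(n-1)^2+1$. The count of the triangles in $A(k)$ is unaffected, since it only relies on $k > n$, which holds here; so the boundary issue is entirely about the triangles in $B(k)$. For these, case~2 is ruled out by Lemma \ref{lem:two} exactly as before. In case~1 one has $x^2+y^2=u^2+w^2$ with $0\leq x,u\leq n-1$, $y+w=k-1$, and $y\geq \frac{k}{2}$; since $y\geq \frac{k}{2}=\frac{(n-1)^2+1}{2}>\frac{(n-1)^2}{2}$, whenever $x>0$ (which forces $u>0$) Lemma \ref{lem:one}, applied with $n-1$ in place of $n$, gives $x=u$ and $y=w$, so no triangle beyond those already counted in Theorem \ref{thm:main} appears.

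The crux is the degenerate case $x=0$, and this is precisely where the oddness of $n$ enters. If $x=0$ and $y=w$ then $u=0$ and the three vertices are collinear, so this is discarded. If $x=0$ and $y>w$, then $u^2=y^2-w^2$ with $u\leq n-1$; a short estimate (if $y\geq w+2$ then $y^2-w^2\geq 4y-4\geq 2k-4 > (n-1)^2\geq u^2$) forces $y=w+1$, and together with $y+w=k-1$ this requires $k$ to be even. But for odd $n$ the value $k=(n-1)^2+1$ is odd, so the case $x=0$ cannot occur. Consequently, at $k=(n-1)^2+1$ the set $B(k)$ contributes nothing beyond the triangles of Theorem \ref{thm:main}, and $c_n$ satisfies its Theorem~\ref{thm:main2} closed form there.

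With the closed form for $c_n$ now valid for all $k\geq (n-1)^2+1$ (for odd $n$), the cancellation $c_n(k)=c_n(k-2)$ holds as soon as $k-2\geq (n-1)^2+1$, i.e.\ $k> (n-1)^2+2$, and running the same telescoping as in Theorem \ref{thm:main} yields the claimed recurrence. I expect the main obstacle to be exactly the $x=0$ boundary case: Lemma \ref{lem:one} does not apply there because it requires $x,u>0$, while Lemma \ref{lem:two} is just too weak at $y=\frac{(n-1)^2+1}{2}$, so one genuinely needs the Pythagorean-triple/parity argument to close the gap, and to verify that this argument necessarily fails when $n$ is even (which is what makes the bound best possible in that case).
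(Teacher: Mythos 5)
Your proposal is correct and follows essentially the same route as the paper: the same telescoping reduction to the closed form of $c_n(k)$ at the boundary $k=(n-1)^2+1$, Lemma \ref{lem:one} (with $n-1$ in place of $n$) for the case $x>0$, and the same $y=w+1$ estimate with the parity/Pythagorean-triple argument to rule out $x=0$ when $n$ is odd. The only cosmetic difference is that you make the reduction to extending $c_n$'s closed form explicit, which the paper leaves implicit.
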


We can rewrite this in an inhomogeneous form of lower degree:

\begin{theorem} \label{thm:main2-even}
Suppose that $n $ is odd and  $ k > (n-1)^2$. Then
$a_n(k) = 2a_n(k-1)-a_n(k-2)+n(n-1) + \lfloor \frac{(n-1)^2}{2}\rfloor$ if $k$ is odd and $a_n(k) = 2a_n(k-1)-a_n(k-2) + \lfloor \frac{(n-1)^2}{2}\rfloor$ if $k$ is even.  
\end{theorem}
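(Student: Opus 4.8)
The plan is to reduce Theorem~\ref{thm:main2-even} to a single additional evaluation of $c_n(k)$, since the recurrence asserted here is exactly the inhomogeneous degree-two form already isolated inside the proof of Theorem~\ref{thm:main}. Indeed, the three-part decomposition of the grid into $p_1$, $p_2$, $p_3$ gives, from the definitions and translation invariance alone, both $b_n(k)=a_n(k)-a_n(k-1)$ and $b_n(k)=b_n(k-1)+c_n(k)$, hence the identity $a_n(k)=2a_n(k-1)-a_n(k-2)+c_n(k)$ for every $k>2$, with no further lower bound on $k$. So the theorem is equivalent to the claim that, for odd $n$ and all $k\geq (n-1)^2+1$, the quantity $c_n(k)$ equals $n(n-1)+\lfloor\frac{(n-1)^2}{2}\rfloor$ when $k$ is odd and $\lfloor\frac{(n-1)^2}{2}\rfloor$ when $k$ is even.

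First I would note that for $k>(n-1)^2+1$ this is precisely the value of $c_n(k)$ computed in the proof of Theorem~\ref{thm:main2}, so only the boundary value $k=(n-1)^2+1$ remains. Because $n$ is odd, $(n-1)^2$ is even and this $k$ is odd, so I only need to verify the odd formula there. Writing $c_n(k)=|A(k)|+|B(k)|$, the term $|A(k)|=\lfloor\frac{(n-1)^2}{2}\rfloor$ follows from Lemma~\ref{lem:three} exactly as before, since that count needs only $k>n$, which holds. For $|B(k)|$ I would invoke the Pythagorean-triple analysis already carried out for $k=(n-1)^2+1$: case~2 is impossible, the subcase $x=0$ of case~1 forces $k$ even and so cannot occur for our odd $k$, and in the subcase $x>0$ Lemma~\ref{lem:one} (with its $n$ taken to be $n-1$) identifies the triangles of $B(k)$ with the $n(n-1)$ triangles of Theorem~\ref{thm:main}. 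Substituting $|A(k)|$ and $|B(k)|$ into the identity above then yields the stated recurrence for all $k\geq(n-1)^2+1$, i.e.\ for $k>(n-1)^2$.

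The one delicate point, and where the improvement over Theorem~\ref{thm:main2} actually lives, is the passage from Lemma~\ref{lem:two} to the sharper Lemma~\ref{lem:one} at the boundary. I would check that with the normalization $y\geq \frac{k}{2}=\frac{(n-1)^2+1}{2}$ the hypothesis $y>\frac{(n-1)^2}{2}$ of Lemma~\ref{lem:one} (applied with $n-1$ in place of $n$) holds strictly, which is exactly what gains the extra value of $k$; the accompanying parity remark, that odd $n$ forces odd $k$ and thereby kills the $x=0$ Pythagorean degeneracy, is the other essential ingredient. Everything else is a direct reuse of the earlier computations, so I expect the main obstacle to be purely bookkeeping: confirming that the $x=0$ case genuinely contributes nothing for odd $k$ and that no triangle is double counted between $A(k)$ and $B(k)$.
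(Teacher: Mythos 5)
Your proposal is correct and reconstructs essentially the paper's own argument: the paper also reduces the theorem to the identity $a_n(k)=2a_n(k-1)-a_n(k-2)+c_n(k)$ from the proof of Theorem~\ref{thm:main}, and handles the boundary case $k=(n-1)^2+1$ exactly as you do, via Lemma~\ref{lem:one} for the $x>0$ subcase and the parity observation that odd $n$ makes $k=(n-1)^2+1$ odd, ruling out the $x=0$ Pythagorean subcase. Your explicit checks (strictness of $y>\frac{(n-1)^2}{2}$ under the normalization $y\geq \frac{k}{2}$, and that the $A(k)$ count only needs $k>n$) are precisely the details the paper leaves implicit.
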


Again, Theorem \ref{thm:main-even} is still valid when restricted to acute isosceles triangles.  When $n=3$ and $k=(n-1)^2+1=5$, two of the triangles in $B(k)$ are right isosceles and for $n>3$ and $k=(n-1)^2+1$, all triangles in $B(k)$ are obtuse triangles.  Thus we have

\begin{theorem} \label{thm:main-even-aor}
Suppose $n$ is odd.   Then
$a^a_n(k) = 2a^a_n(k-1)-2a^a_n(k-3)+a^a_n(k-4)$ for $k > (n-1)^2+2$.  Furthermore,
$a^o_n(k) = 2a^o_n(k-1)-2a^o_n(k-3)+a^o_n(k-4)$ and 
$a^r_n(k) = 2a^r_n(k-1)-2a^r_n(k-3)+a^r_n(k-4)$ for $k > \max(7,(n-1)^2+2)$.
\end{theorem}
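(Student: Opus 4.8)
The plan is to reuse the column-decomposition from the proof of Theorem \ref{thm:main}, but to keep track of the angle-type of each triangle counted by $c_n(k)$, and then to push the validity of the corresponding inhomogeneous recurrences down to $k=(n-1)^2+1$ by invoking the Pythagorean-triple argument already carried out for Theorem \ref{thm:main-even}. For each type $t\in\{a,o,r\}$ I would write $c^t_n(k)$ for the number of type-$t$ isosceles triangles with vertices in both end columns, record the inhomogeneous recurrence $a^t_n(k)=2a^t_n(k-1)-a^t_n(k-2)+c^t_n(k)$ (these are exactly the relations behind Theorems \ref{thm:main2-acute}, \ref{thm:main2-obtuse} and \ref{thm:main2-rectangular}), and observe that the homogeneous recurrence $a^t_n(k)=2a^t_n(k-1)-2a^t_n(k-3)+a^t_n(k-4)$ holds at any $k$ for which this inhomogeneous relation is valid at both $k$ and $k-2$ and $c^t_n(k)=c^t_n(k-2)$. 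The whole problem thus reduces to pinning down, for odd $n$, the smallest $k$ from which both conditions hold for each type.

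For the acute type I expect the argument to be immediate. Every acute triangle contributing to $c_n(k)$ lies in $A(k)$, whose count $\lfloor(n-1)^2/2\rfloor$ is independent of the parity of $k$, and the sole obstruction to extending the inhomogeneous relation down to $k=(n-1)^2+1$ was the $x=0$ Pythagorean configuration, which sits in $B(k)$ and occurs only for even $k$. Since $(n-1)^2+1$ is odd for odd $n$, that configuration is absent, so $c^a_n(k)=\lfloor(n-1)^2/2\rfloor$ already at $k=(n-1)^2+1$; the binding constraint then becomes validity of the inhomogeneous relation at $k-2$, which gives $k>(n-1)^2+2$.

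For the obtuse and right types I would first separate the generic behaviour from a single exceptional configuration. For large odd $k$ the $B(k)$ triangles are obtuse, so generically $c^o_n(k)=n(n-1)$ for odd $k$, $0$ for even $k$, and $c^r_n(k)=0$. I would then examine these triangles at the critical value $k=(n-1)^2+1$, where $p_1$ and $p_3$ share a row and the apex $p_2$ sits in the centre column: the apex angle is right precisely when the vertical offset satisfies $(\Delta r)^2=\left(\tfrac{k-1}{2}\right)^2$, which for $n\ge 5$ never happens but for $n=3$, $k=5$ occurs for exactly the two offsets $\Delta r=\pm 2$. Hence for odd $n\ge 5$ the parity formulas for $c^o_n$ and $c^r_n$ persist down to $k=(n-1)^2+1$ and the improved bound $k>(n-1)^2+2$ follows exactly as in the acute case (and dominates $7$). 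For $n=3$, however, $c^o_3(5)=n(n-1)-2=4\neq 6=c^o_3(7)$ and $c^r_3(5)=2\neq 0=c^r_3(7)$, so $c^t_3(k)=c^t_3(k-2)$ fails at $k=7$; the homogeneous recurrence first becomes valid at $k=8$, i.e. for $k>7$, and intersecting with the generic threshold yields $k>\max(7,(n-1)^2+2)$.

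The main obstacle I anticipate is the careful bookkeeping of the $n=3$, $k=5$ anomaly rather than any genuinely hard estimate: I must verify that exactly two $B(5)$ triangles become right isosceles (so that the shifted values $c^o_3(5)=4$ and $c^r_3(5)=2$ are computed correctly), and that $n=3$ is the only odd value for which a $B(k)$ triangle changes type at $k=(n-1)^2+1$, so that the constant $7$ in $\max(7,(n-1)^2+2)$ is genuinely needed for $n=3$ yet harmlessly absorbed for all $n\ge 5$.
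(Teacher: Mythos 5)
Your proposal is correct and takes essentially the same route as the paper, which likewise obtains the theorem by restricting the analysis behind Theorem \ref{thm:main-even} to each angle type: for odd $n$ the critical case $k=(n-1)^2+1$ has all $A(k)$ triangles acute and all $B(k)$ triangles obtuse, except for $n=3$, $k=5$, where two $B(k)$ triangles are right isosceles, which is exactly what forces the $\max(7,(n-1)^2+2)$ bound. Your explicit right-angle criterion $(\Delta r)^2=\left(\frac{k-1}{2}\right)^2$ and the counts $c^o_3(5)=4$, $c^r_3(5)=2$ merely spell out details the paper leaves as a one-line remark.
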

 
Similiarly we have:

\begin{theorem} \label{thm:main2-even-obtuse}
Suppose $n$ is odd and $ k > \max(5,(n-1)^2)$. Then
$a^o_n(k) = 2a^o_n(k-1)-a^o_n(k-2)+n(n-1) $ for $k$ odd and $a^o_n(k) = 2a^o_n(k-1)-a^o_n(k-2) $ for $k$ even.  
\end{theorem}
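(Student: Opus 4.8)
The plan is to repeat the column-decomposition of Theorem~\ref{thm:main}, restricted to obtuse isosceles triangles as in Theorem~\ref{thm:main2-obtuse}, and then to lower the threshold on $k$ by one using the Pythagorean-triple analysis of Section~4 exactly as was done for Theorem~\ref{thm:main2-even}. Let $a^o_n(k)$, $b^o_n(k)$ and $c^o_n(k)$ denote the obtuse analogues of $a_n(k)$, $b_n(k)$ and $c_n(k)$, so that $b^o_n(k)$ counts the obtuse isosceles triangles having a vertex in the last column $p_3$ and $c^o_n(k)$ counts those having a vertex in each of $p_1$ and $p_3$. The two identities $b^o_n(k) = a^o_n(k) - a^o_n(k-1)$ and $b^o_n(k) = b^o_n(k-1) + c^o_n(k)$ depend only on which columns the vertices occupy and so hold verbatim for the obtuse subfamily; chaining them as in the proof of Theorem~\ref{thm:main} gives $a^o_n(k) = 2a^o_n(k-1) - a^o_n(k-2) + c^o_n(k)$. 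The problem therefore reduces to evaluating $c^o_n(k)$ throughout the range $k > \max(5,(n-1)^2)$.

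I would evaluate $c^o_n(k)$ by reusing the partition of $c_n(k)$ into $A(k)$ and $B(k)$. The triangles of $A(k)$ are acute, so they contribute nothing to $c^o_n(k)$ and only $B(k)$ survives; for $n > 2$ every triangle of $B(k)$ is obtuse, so on this range $c^o_n(k)$ equals the full count of $B(k)$, namely $P(n,2) = n(n-1)$ when $k$ is odd and $0$ when $k$ is even. The only place this clean count can fail as $k$ decreases to the improved threshold $(n-1)^2+1$ is the degenerate subcase $x = 0$ of case~1, which as shown in Section~4 forces $k$ to be even; since $n$ is odd, $(n-1)^2 + 1$ is odd, so this subcase never occurs and the identification of $c^o_n(k)$ persists down to $k = (n-1)^2+1$, i.e.\ for all $k > (n-1)^2$. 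Substituting the two values of $c^o_n(k)$ into the recurrence yields the two asserted formulas.

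The one delicate point---and the reason the bound reads $\max(5,(n-1)^2)$ rather than simply $(n-1)^2$---is the exceptional configuration at $n = 3$, $k = 5 = (n-1)^2 + 1$. As noted after Theorem~\ref{thm:main-even-aor}, two of the six triangles of $B(5)$ are right rather than obtuse, so there $c^o_3(5) = 4 \neq n(n-1) = 6$ and the constant in the recurrence would be wrong. I would dispose of this by simply excluding $k = 5$: for $n = 3$ one has $\max(5,(n-1)^2) = 5$, so the hypothesis $k > 5$ removes exactly this value, whereas for every odd $n \geq 5$ one has $(n-1)^2 \geq 16 > 5$ and the bound is just $k > (n-1)^2$, matching the Section~4 improvement with no further exclusion. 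Confirming that this single configuration is the only obstruction to the obtuse count at the improved threshold is the main thing to verify; once it is set aside, the recurrence is immediate from the evaluation of $c^o_n(k)$.
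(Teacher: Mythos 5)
Your proposal is correct and follows essentially the argument the paper intends: the paper states this theorem without explicit proof (``Similarly we have''), relying on the reader to combine the column decomposition $a^o_n(k)=2a^o_n(k-1)-a^o_n(k-2)+c^o_n(k)$ of Theorem~\ref{thm:main2-obtuse} with the Section~4 observation that for odd $n$ the boundary value $k=(n-1)^2+1$ is odd, so the $x=0$ subcase cannot occur, and you have assembled exactly these pieces. You also correctly identify the one subtlety the paper flags only in passing --- that for $n=3$, $k=5$ two triangles of $B(k)$ are right rather than obtuse, which is precisely why the hypothesis reads $k>\max(5,(n-1)^2)$ rather than $k>(n-1)^2$.
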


When $n$ is even and $k=(n-1)^2+1$, the above analysis shows that there are 4 extra isosceles triangles in $B(k)$ due to the case $x=0$.  These triangles are right triangles for $n=2$ and obtuse for $n> 2$. This is illustrated in Fig. \ref{fig:two} for the case $k=10$, $n=4$. This means that when restricted to acute isosceles triangles (or right isosceles triangles with $n>2$), the condition that $n$ is odd is not necessary in Theorem \ref{thm:main-even}.  In addition Theorems \ref{thm:main2-acute} and \ref{thm:main2-rectangular} can be improved to:

\begin{figure}[htbp]
\centerline{\includegraphics{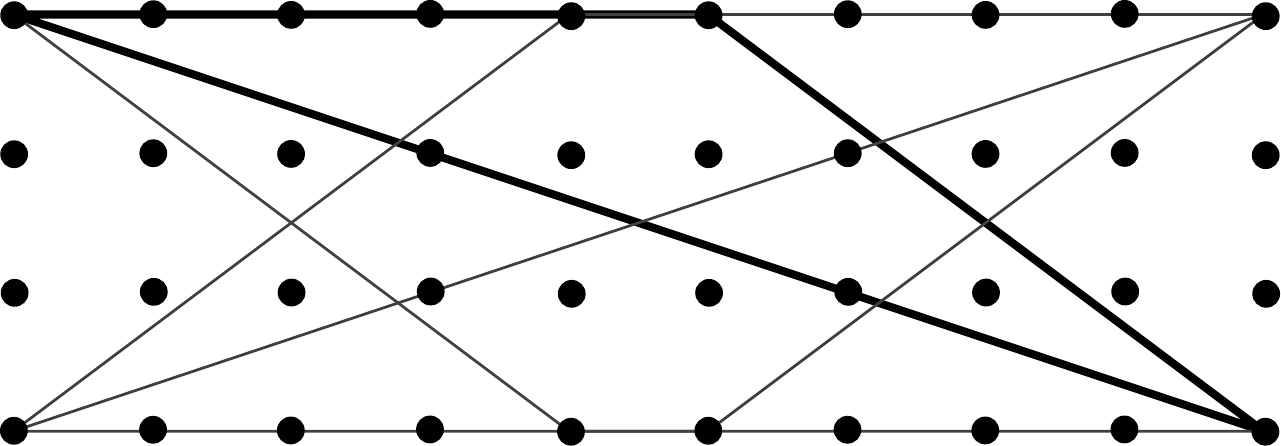}}
\caption{$4$ obtuse isosceles triangles in $B(k)$ corresponding to $x=0$ for the case $k=10$, $n=4$.}\label{fig:two}
\end{figure}

\begin{theorem} \label{thm:main2-even-acute}
Suppose $ k > (n-1)^2$. Then
$a^a_n(k) = 2a^a_n(k-1)-a^a_n(k-2)+\left\lfloor \frac{(n-1)^2}{2}\right\rfloor $.
\end{theorem}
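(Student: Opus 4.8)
The plan is to isolate, exactly as in the proofs of Theorems \ref{thm:main} and \ref{thm:main2-acute}, the contribution of those triangles that meet both outer columns. Let $c^a_n(k)$ denote the number of acute isosceles triangles with vertices in both the first and last columns $p_1$ and $p_3$. The same bookkeeping that produced the earlier recurrences, namely $b^a_n(k)=a^a_n(k)-a^a_n(k-1)$ together with $b^a_n(k)=b^a_n(k-1)+c^a_n(k)$ (both of which survive the passage to acute triangles, since acuteness is preserved by the column-translation used in the decomposition), yields the structural identity
\[ a^a_n(k)=2a^a_n(k-1)-a^a_n(k-2)+c^a_n(k), \]
valid for all $k\ge 3$. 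In view of this identity, the asserted recurrence at a given $k$ is \emph{equivalent} to the single equation $c^a_n(k)=\lfloor (n-1)^2/2\rfloor$. Theorem \ref{thm:main2-acute} already supplies this for every $k\ge (n-1)^2+2$, so the entire task reduces to verifying $c^a_n(k)=\lfloor (n-1)^2/2\rfloor$ at the one remaining value $k=(n-1)^2+1$.

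For that value I would reuse the split $c_n(k)=|A(k)|+|B(k)|$. By Lemma \ref{lem:three} the family $A(k)$ has exactly $\lfloor (n-1)^2/2\rfloor$ members, and each of them is acute: a law-of-cosines computation gives apex angle $\theta$ with $\cos\theta=\frac{(k-1)^2-m^2}{(k-1)^2+m^2}$, which is positive because the half-base $m\le\lfloor(n-1)/2\rfloor$ is far below the height $k-1=(n-1)^2$, while the two base angles equal $90^\circ-\theta/2<90^\circ$ automatically. Hence $A(k)$ contributes precisely the required count, and everything hinges on showing that $B(k)$ contributes no acute triangle at this boundary value.

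This last point is the crux, and it is exactly the Pythagorean-triple computation already carried out in the paragraph preceding Theorem \ref{thm:main-even}. At $k=(n-1)^2+1$, case 2 cannot occur; case 1 with $x>0$ yields, through Lemma \ref{lem:one}, only the triangles described in Theorem \ref{thm:main}, which are non-acute (indeed, since $x\le n-1\le (k-1)/2$ here, their apex angle is at least $90^\circ$, being right or obtuse); and case 1 with $x=0$ contributes, when $n$ is even, exactly the four extra triangles of Fig. \ref{fig:two}, which are again right or obtuse. Thus every triangle of $B(k)$ is non-acute and $c^a_n(k)=|A(k)|=\lfloor(n-1)^2/2\rfloor$, as needed. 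The reason the parity hypothesis of Theorem \ref{thm:main-even} \emph{and} the extra unit in the threshold can both be dispensed with here is precisely this: the $x=0$ boundary triangles, which do disturb the recurrence for the full isosceles count, are invisible to the acute count. The only loose end is the degenerate case $n=2$, where $\lfloor(n-1)^2/2\rfloor=0$ and the new boundary is $k=2$; this is immediate, since a two-row grid contains no acute triangle at all, as any triangle has two vertices in one row and a direct check shows the angle at the third vertex is never acute.
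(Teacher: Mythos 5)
Your proof is correct and takes essentially the paper's own route: the paper obtains this theorem from precisely the observation you reconstruct, namely that the structural identity $a^a_n(k)=2a^a_n(k-1)-a^a_n(k-2)+c^a_n(k)$ reduces everything to the boundary value $k=(n-1)^2+1$, where the Pythagorean-triple analysis preceding Theorem \ref{thm:main-even} shows the four extra $x=0$ triangles are right or obtuse and hence invisible to the acute count, leaving $c^a_n(k)=\lfloor (n-1)^2/2\rfloor$. One trivial quibble that does not affect correctness: in your $n=2$ check the angle at the third vertex \emph{can} be acute (e.g.\ $(0,0)$, $(1,0)$, $(0,1)$, where the apex angle is $45^\circ$ but a base angle is right), so the accurate statement is that some angle is always at least $90^\circ$, which still yields $a^a_2(k)=0$ and the recurrence trivially.
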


This can be rewritten in homogeneous form as:

\begin{corollary} \label{cor:main2-even-acute}
Suppose $ k > (n-1)^2+1$. Then $a^a_n(k) = 3a^a_n(k-1)-3a^a_n(k-2) + a^a_n(k-3)$.
\end{corollary}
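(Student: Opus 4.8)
The plan is to derive the homogeneous recurrence directly from the inhomogeneous one in Theorem~\ref{thm:main2-even-acute} by a single backward difference that cancels the constant term. Theorem~\ref{thm:main2-even-acute} asserts
\[ a^a_n(k) = 2a^a_n(k-1)-a^a_n(k-2)+C, \qquad C = \left\lfloor \tfrac{(n-1)^2}{2}\right\rfloor, \]
for every $k > (n-1)^2$, and crucially the same constant $C$ appears regardless of the parity of $k$. This parity-independence is the feature that makes the elimination clean: if the additive constant depended on $k \bmod 2$, subtracting consecutive instances would not annihilate it.

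First I would instantiate this identity at both $k$ and $k-1$. The instance at $k$ is valid since $k > (n-1)^2+1 > (n-1)^2$, and the instance at $k-1$ is valid since $k-1 > (n-1)^2$; this is exactly why the hypothesis must be strengthened from $k > (n-1)^2$ to $k > (n-1)^2+1$. Subtracting the two identities removes the common constant $C$, leaving
\[ a^a_n(k)-a^a_n(k-1) = 2a^a_n(k-1)-3a^a_n(k-2)+a^a_n(k-3), \]
which rearranges to $a^a_n(k) = 3a^a_n(k-1)-3a^a_n(k-2)+a^a_n(k-3)$, the claimed relation.

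Conceptually, this reflects the fact that the backward difference operator raises the order of the recurrence by one while annihilating a constant forcing term: the inhomogeneous recurrence has associated characteristic polynomial $(x-1)^2$, and the constant term contributes an extra factor of $(x-1)$, so applying the difference produces the operator $(x-1)^3 = x^3-3x^2+3x-1$, whose coefficients are precisely $1,-3,3,-1$. There is essentially no genuine obstacle in this argument; the only point requiring care is the bookkeeping of the validity range, namely verifying that both shifted instances of Theorem~\ref{thm:main2-even-acute} are simultaneously in force, which is what forces the $+1$ in the hypothesis.
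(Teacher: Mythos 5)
Your proof is correct and is essentially identical to the paper's own: both subtract the instance of Theorem~\ref{thm:main2-even-acute} at $k-1$ from the instance at $k$ to cancel the constant $\left\lfloor \frac{(n-1)^2}{2}\right\rfloor$, and your bookkeeping of the validity range (the hypothesis $k > (n-1)^2+1$ guaranteeing both instances hold) is exactly the right point of care. The added remarks about the difference operator and the characteristic polynomial $(x-1)^3$ are accurate context but not a different method.
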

\begin{proof}
From Theorem \ref{thm:main2-even-acute}, we have $a^a_n(k) = 2a^a_n(k-1)-a^a_n(k-2) + \lfloor \frac{(n-1)^2}{2}\rfloor$
and $a^a_n(k-1) = 2a^a_n(k-2)-a^a_n(k-3) + \lfloor \frac{(n-1)^2}{2}\rfloor$.  Subtracting these two equations and combining terms we reach the conclusion.
\end{proof}

A similar homogeneous linear recurrence for right isosceles triangles is:

\begin{theorem} \label{thm:main2-even-right}
Suppose  $n >2$ and $ k > \max(5,(n-1)^2)$.  Then
$a^r_n(k) = 2a^r_n(k-1)-a^r_n(k-2)$.
\end{theorem}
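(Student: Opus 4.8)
The plan is to run the same column decomposition used in Theorem~\ref{thm:main}, but tracking only right isosceles triangles. Write $a^r_n(k)$, $b^r_n(k)$ and $c^r_n(k)$ for the right-isosceles analogues of $a_n(k)$, $b_n(k)$ and $c_n(k)$. First I would observe that the two combinatorial identities carry over verbatim: $b^r_n(k) = a^r_n(k) - a^r_n(k-1)$ is simply the count of right isosceles triangles meeting the last column, and $b^r_n(k) = b^r_n(k-1) + c^r_n(k)$ follows from the same translation argument, which is an isometry and hence preserves the right-isosceles property. Combining them yields the inhomogeneous relation
\[ a^r_n(k) = 2a^r_n(k-1) - a^r_n(k-2) + c^r_n(k), \]
valid for all $k \geq 3$. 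The theorem then reduces to showing that $c^r_n(k) = 0$ whenever $n > 2$ and $k > \max(5,(n-1)^2)$.

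Next I would bound $c^r_n(k)$ using the same $A(k)$/$B(k)$ partition of triangles meeting both $p_1$ and $p_3$. Since $k > n$ in our range, the $A(k)$ triangles are exactly those with two base vertices $2m$ rows apart in one column and the apex at the middle row of the other column; a direct inner-product computation (the vector from a base vertex to the apex has positive dot product $2m^2$ with the base, and the apex sees the base under a positive dot product $(k-1)^2 - m^2$) shows all three angles are acute, so no $A(k)$ triangle is right. For the $B(k)$ triangles I would reuse the earlier structural analysis: case~2 is impossible, and in case~1 Lemma~\ref{lem:two} (when $k > (n-1)^2+1$) together with the refined Pythagorean analysis of Section~4 (at the single boundary value $k=(n-1)^2+1$) leaves only the standard configuration, in which the vertices in $p_1$ and $p_3$ share a row $r$ and the apex sits at the middle column in a row $s \neq r$, plus the four extra $x=0$ triangles at the even-$n$ boundary, all of which are already known to be obtuse for $n>2$.

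The crux is therefore to confirm that these standard $B(k)$ triangles are strictly obtuse, rather than right, throughout the stated range. Here the base $p_1p_3$ is horizontal of length $k-1$ and each equal leg has squared length $((k-1)/2)^2 + (s-r)^2$, so the apex angle at $p_2$ is obtuse precisely when $(k-1)^2 > 2\bigl[((k-1)/2)^2 + (s-r)^2\bigr]$, which simplifies to $k-1 > 2\lvert s-r\rvert$. Since $\lvert s-r\rvert \leq n-1$, this holds as soon as $k > 2n-1$, and one checks that $k > \max(5,(n-1)^2)$ forces $k > 2n-1$ for every $n>2$: the bound $k>5$ is what is needed at $n=3$ (where $2n-1=5$ but $(n-1)^2=4$), while $(n-1)^2 \geq 2n-1$ already holds for $n \geq 4$. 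Hence every $B(k)$ triangle is obtuse, giving $c^r_n(k)=0$ and the homogeneous recurrence. I expect the genuine obstacle to be exactly this boundary bookkeeping: at $n=3$, $k=5$ the inequality degenerates to the equality $k-1 = 2\lvert s-r\rvert$ with $\lvert s-r\rvert = 2$, the apex angle becomes exactly $90^\circ$, and the two right isosceles triangles noted earlier reappear — which is precisely why $k=5$ must be excluded and why the bound is written as $\max(5,(n-1)^2)$ rather than $(n-1)^2$.
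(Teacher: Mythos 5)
Your proposal is correct and takes essentially the same route as the paper, which states this theorem without a separate proof as a consequence of the same column decomposition: all $A(k)$ triangles are acute, all $B(k)$ triangles in this range are obtuse for $n>2$ (including the four $x=0$ triangles at the boundary $k=(n-1)^2+1$), and the case $n=3$, $k=5$ is excluded precisely because two $B(k)$ triangles there are right isosceles. Your only additions are the explicit dot-product and apex-angle computations (the criterion $k-1 > 2\lvert s-r\rvert$), which correctly fill in the quantitative details the paper merely asserts.
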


Similarly, we have

\begin{theorem} \label{thm:main3}
Suppose that $n$ is even and $ k = (n-1)^2+1$. Then
$a_n(k) = 2a_n(k-1)-a_n(k-2) + \left\lfloor \frac{(n-1)^2}{2}\right\rfloor + 4$.  
\end{theorem}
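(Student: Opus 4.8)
The plan is to reuse the combinatorial identity established in the proof of Theorem~\ref{thm:main}, namely $a_n(k) = 2a_n(k-1) - a_n(k-2) + c_n(k)$, and then to evaluate $c_n(k)$ at the single point $k = (n-1)^2+1$ with $n$ even. The key observation is that this identity is a purely combinatorial consequence of the two relations $b_n(k) = a_n(k)-a_n(k-1)$ and $b_n(k) = b_n(k-1)+c_n(k)$, neither of which depends on a size condition on $k$; they hold for every $k$ with $k-2\geq 2$. Hence the identity continues to hold at $k=(n-1)^2+1$, and the whole problem reduces to computing $c_n(k) = A(k)+B(k)$ there.

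First I would handle $A(k)$. Since $k = (n-1)^2+1 > n$, the structural description of $A(k)$ from the proof of Theorem~\ref{thm:main} remains valid: every triangle counted by $A(k)$ has its two same-column vertices an even number of rows apart with the apex in the opposite column at the midpoint row, and this count does not depend on the parity of $k$. By Lemma~\ref{lem:three} this gives $A(k) = \lfloor (n-1)^2/2\rfloor$, exactly as in the even-$k$ case of Theorem~\ref{thm:main2}.

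The heart of the argument is the evaluation of $B(k)$, for which I would invoke the Pythagorean-triple analysis carried out just before the statement. Because $n$ is even, $k=(n-1)^2+1$ is even. In Case~1 the equality $|e_1|=|e_2|$ reads $x^2+y^2=u^2+w^2$ with $y\geq k/2 > (n-1)^2/2$, so whenever $x>0$ (which forces $u>0$) Lemma~\ref{lem:one} gives $y=w$, requiring $k-1$ even, i.e.\ $k$ odd---impossible here. Case~2 is ruled out exactly as before. Thus the only surviving triangles in $B(k)$ arise from $x=0$, and the boundary-row bookkeeping in that analysis (the longer horizontal leg may lie on the $p_1$ side or the $p_3$ side, and the horizontal leg may sit in the top or the bottom row, the slanted leg of full vertical extent $n-1$ being then determined) shows there are exactly $4$ such triangles, so $B(k)=4$.

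Combining the two counts, $c_n(k) = \lfloor (n-1)^2/2\rfloor + 4$, and substituting into $a_n(k)=2a_n(k-1)-a_n(k-2)+c_n(k)$ yields the stated formula. The main obstacle I expect is the exactness of $B(k)=4$: one must verify both that each of the four placements genuinely fits inside the $n$ rows (which is what pins the horizontal leg to row $0$ or row $n-1$) and that no further solutions hide in the symmetric sub-case $u=0$, which collapses to a collinear configuration and hence contributes nothing. I would also check the degenerate endpoint $n=2$ (where $k=2$ and the middle band $p_2$ is empty) directly, confirming that $a_2(2)=4$ agrees with $2a_2(1)-a_2(0)+\lfloor 1/2\rfloor+4$.
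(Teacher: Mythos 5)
Your proposal is correct and takes essentially the paper's own route: the paper derives Theorem \ref{thm:main3} exactly as you do, by reusing the identity $a_n(k)=2a_n(k-1)-a_n(k-2)+c_n(k)$ from the proof of Theorem \ref{thm:main} and then using the Pythagorean-triple analysis at $k=(n-1)^2+1$ (forcing $y=w+1$, $u=n-1$, hence the four $x=0$ triangles of Fig. \ref{fig:two}) to get $c_n(k)=\lfloor\frac{(n-1)^2}{2}\rfloor+4$. Your explicit verification that the four placements fit in the grid and your separate check of the degenerate endpoint $n=2$ (where $p_2$ is empty) are slightly more careful than the paper's informal treatment, but the argument is the same.
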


\begin{theorem} \label{thm:main3-obtuse}
Suppose that $n>2$ is even and $k = (n-1)^2+1$.  Then $a^o_n(k) = 2a^o_n(k-1)-a^o_n(k-2)+4 $ .  
\end{theorem}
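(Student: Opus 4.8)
The plan is to reuse the column-decomposition identity that underlies every inhomogeneous recurrence in this paper, and then to pin down the single quantity $c^o_n(k)$ at the special value $k=(n-1)^2+1$. Exactly as in the proof of Theorem~\ref{thm:main}, I would decompose the $n$ by $k$ grid into the first column $p_1$, the middle block $p_2$ of size $n$ by $k-2$, and the last column $p_3$, and let $c^o_n(k)$ denote the number of obtuse isosceles triangles with a vertex in each of $p_1$ and $p_3$. Since triangle type is preserved under the horizontal translation used there, the same chain of equalities $b^o_n(k)=a^o_n(k)-a^o_n(k-1)$ and $b^o_n(k)=b^o_n(k-1)+c^o_n(k)$ yields the identity $a^o_n(k)=2a^o_n(k-1)-a^o_n(k-2)+c^o_n(k)$, valid for every $k$ large enough that the decomposition is nontrivial; here $k=(n-1)^2+1\ge 10$ since $n\ge 4$. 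Thus the theorem reduces entirely to showing $c^o_n(k)=4$.

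Next I would evaluate $c^o_n(k)$. Because $n>2$, the triangles of $A(k)$ (all three vertices in $p_1\cup p_3$) are acute and contribute nothing, so $c^o_n(k)$ counts precisely the obtuse triangles in $B(k)$, those with one vertex in each of $p_1,p_2,p_3$. For $n>3$ every triangle of $B(k)$ is obtuse, and $n$ even forces $n\ge 4$, whence $c^o_n(k)=|B(k)|$, and it remains only to count $B(k)$ at $k=(n-1)^2+1$. I would then run the case analysis of Section~\ref{sec:count} together with its Pythagorean refinement. Case~2 (where $e_3$ matches $e_1$ or $e_2$) is impossible by Lemma~\ref{lem:two}. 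In Case~1 the two slanted edges are equal, giving $x^2+y^2=u^2+w^2$ with $0\le x,u\le n-1$, $y+w=k-1$, and $y\ge w$. The solutions with $x>0$ are the standard ones, which require $y=w$ and hence $k$ odd; but $n$ even makes $k=(n-1)^2+1$ even, so these contribute $0$ (matching the generic even-$k$ value $c^o_n(k)=0$ of Theorem~\ref{thm:main2-obtuse}).

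Finally I would handle the $x=0$ solutions, which are governed by the Pythagorean triple $a=\tfrac{k}{2}$, $b=a-1$, $c=\sqrt{k-1}$ identified above. At $k=(n-1)^2+1$ one has $k-1=(n-1)^2$, so $c=\sqrt{k-1}=n-1$, which is exactly the largest admissible vertical span; the triple is therefore realizable in the grid and produces the four extra triangles already exhibited in Figure~\ref{fig:two}. Hence $|B(k)|=0+4=4$, so $c^o_n(k)=4$ and the stated recurrence follows. The only genuinely delicate point is the count of exactly four triangles in the $x=0$ case: since $c=n-1$ pins the relevant pair of vertices to rows $1$ and $n$ while the horizontal positions are forced by $y=a$, $w=b$, one obtains two triangles (reflected vertically), and the mirror configuration obtained by the left–right symmetry of the grid gives two more; confirming that these four are obtuse and distinct from the standard configurations is exactly what the preceding Pythagorean analysis and Figure~\ref{fig:two} establish.
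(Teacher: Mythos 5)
Your proposal is correct and follows essentially the same route as the paper: the paper justifies this theorem by exactly the column-decomposition identity $a^o_n(k)=2a^o_n(k-1)-a^o_n(k-2)+c^o_n(k)$ together with the observation that for even $n$ and $k=(n-1)^2+1$ (so $k$ even) the only contributions to $c^o_n(k)$ are the four obtuse triangles from the $x=0$ Pythagorean case, since $A(k)$ is acute and the $x>0$ case needs $k$ odd. Your accounting of the four triangles (two vertical reflections times two left--right mirrors, pinned by $u=\sqrt{k-1}=n-1$) matches the paper's Figure~\ref{fig:two} analysis.
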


Because of the recurrence relation in Theorem \ref{thm:main}, the generating functions for $a_n(k)$ for $k\geq 1$ will be of the 
form $\frac{p_n(x)}{(x - 1)^{3}(x + 1)}$.  The polynomials $p_n(x)$ for the first few values of $n$ are shown in Table \ref{tbl:generate}.
\begin{table}[htbp]
\begin{center}
\begin{tabular}{|l|l|}
\hline
$n$ & $p_n(x)$ \\ \hline\hline
$2$ & $2 x (2 x^{2} - x - 2)$\\\hline
$3$ & $2 x (2 x^{4} + 4 x^{3} + 2 x^{2} - 8 x - 5)$\\\hline
$4$ & $4 x (x^{10} - x^{8} + 2 x^{6} + x^{5} + 4 x^{4} + 4 x^{3} - 3 x^{2} - 9 x - 4)$\\\hline
$5$ & $4 x (x^{16} - x^{14} + 2 x^{10} + 2 x^{9} - x^{8} - x^{7} + 5 x^{6} + $\\
& $6 x^{5} + 6 x^{4} + x^{3} - 8 x^{2} - 15 x - 6)$\\\hline
$6$ & $2 x (2 x^{26} - 2 x^{24} + 2 x^{22} - 2 x^{20} + 6 x^{16} - 4 x^{14} + 2 x^{13} - 2 x^{11} + $\\
& $6 x^{10} + 12 x^{9} - 6 x^{7} + 26 x^{6} + 24 x^{5} + 6 x^{4} - 8 x^{3} - 30 x^{2} - 43 x - 16)$ \\\hline
$7$ & $2 x (2 x^{36} - 2 x^{34} + 2 x^{28} + 2 x^{26} - 4 x^{24} + 4 x^{22} - 4 x^{20} + 4 x^{19} - 2 x^{17} +$\\
& $ 10 x^{16} -6 x^{14} + 4 x^{13} + 2 x^{12} - 2 x^{11} + 10 x^{10} + 18 x^{9} + 2 x^{8} + 4 x^{7} + $\\
&$40 x^{6} + 22 x^{5} - 2 x^{4} - 20 x^{3} - 44 x^{2} - 58 x - 21)$\\\hline
$8$ & $4 x (x^{50} - x^{48} + x^{46} - x^{44} + 3 x^{36} - 2 x^{34} - x^{32} + 2 x^{28} + 2 x^{26} + x^{25} - 3 x^{24} - x^{23}$\\
&$ + 3 x^{22} + x^{21} - 4 x^{20} + 4 x^{19} + x^{18} - 3 x^{17} + 7 x^{16} - 3 x^{14} + 5 x^{13} + 3 x^{12} - 2 x^{11} $\\
&$+ 6 x^{10} + 13 x^{9} + 10 x^{8} + 7 x^{7} + 19 x^{6} + 9 x^{5} - 7 x^{4} - 17 x^{3} - 29 x^{2} - 37 x - 13) $\\\hline
\end{tabular}
\caption{Numerator of generating functions of $a_n(k)$.}\label{tbl:generate}
\end{center}
\end{table}

Similarly, the polynomials for obtuse, acute and right isosceles triangles are shown in Tables \ref{tbl:generate-obtuse}-\ref{tbl:generate-right}.
We see that the denominator for the generating function of $a^a_n(k)$ can be reduced to $(x-1)^3$ corresponding to the recurrence relation in Corollary \ref{cor:main2-even-acute}.
Similarly, we see that the denominator for the generating function of $a^r_n(k)$ can be reduced to $(x-1)^2$ corresponding to the recurrence relation in
Theorem \ref{thm:main2-rectangular}.

\begin{table}[htbp]
\begin{center}
\begin{tabular}{|l|l|}
\hline
$n$ & $p^o_n(x)$ \\ \hline\hline
$2$ & $-2 x^4$\\\hline
$3$ & $- 2 x^{4} \left(x^{2} + 2\right)$\\\hline
$4$ & $2 x^{3} \left(2 x^{8} - 2 x^{6} - x^{5} - 2 x^{3} + 2 x^{2} - 3 x - 2\right)$\\\hline
$5$ & $2 x \left(2 x^{16} - 2 x^{14} + 4 x^{10} + x^{9} - 4 x^{8} - 4 x^{7} - x^{5} + 4 x^{4} - 6 x^{3} - 3 x^{2} - 1\right)$\\\hline
$6$ & $2 x (2 x^{26} - 2 x^{24} + 2 x^{22} - 2 x^{20} + 6 x^{16} - 6 x^{14} + 2 x^{13} - 3 x^{11} + 6 x^{10}$\\
&$ + 2 x^{9} - 6 x^{8} - 7 x^{7} + 4 x^{6} + 2 x^{4} - 9 x^{3} - 4 x^{2} - 2)$\\\hline
$7$ & $2 x (2 x^{36} - 2 x^{34} + 2 x^{28} + 2 x^{26} - 4 x^{24} + 4 x^{22} - 4 x^{20} + 2 x^{19} + 10 x^{16} - 2 x^{15} - 10 x^{14}$\\
&$ + 5 x^{13} - 8 x^{11} + 8 x^{10} + 3 x^{9} - 8 x^{8} - 6 x^{7} + 8 x^{6} - 3 x^{5} - 11 x^{3} - 4 x^{2} - x - 4)$\\\hline
$8$ & $2 x (2 x^{50} - 2 x^{48} + 2 x^{46} - 2 x^{44} + 6 x^{36} - 4 x^{34} - 2 x^{32} + 4 x^{28} + 2 x^{26} + 2 x^{25} - $\\
&$ 6 x^{24} - 2 x^{23} + 8 x^{22} + 2 x^{21} - 8 x^{20} + 2 x^{19} + 16 x^{16} - 5 x^{15} - 16 x^{14} + 10 x^{13} -  $\\
&$15 x^{11} + 10 x^{10} +4 x^{9} - 4 x^{8} - 5 x^{7} + 6 x^{6} - 6 x^{5} - 2 x^{4} - 13 x^{3} - 4 x^{2} - 2 x - 6)$\\\hline
\end{tabular}
\caption{Numerator of generating functions $\frac{p^o_n(x)}{(x - 1)^{3}(x + 1)}$ of $a^o_n(k)$.}\label{tbl:generate-obtuse}
\end{center}
\end{table}

\begin{table}[htbp]
\begin{center}
\begin{tabular}{|l|l|}
\hline
$n$ & $p^a_n(x)$ \\ \hline\hline
$2$ & $0$\\\hline
$3$ & $2 x^{2} \left(x + 1\right) \left(3 x - 4\right)$\\\hline
$4$ & $2 x^{2} \left(x + 1\right) \left(2 x^{4} - x^{3} + 3 x^{2} + 3 x - 9\right)$\\\hline
$5$ & $2 x^{2} \left(x + 1\right) \left(2 x^{7} - 2 x^{6} + x^{5} + 5 x^{4} + 3 x^{3} - x^{2} + 3 x - 15\right)$\\\hline
$6$ & $2 x^{2} \left(x + 1\right) (2 x^{12} - 2 x^{11} + 2 x^{10} - 2 x^{9} + 7 x^{7} - 5 x^{6} + x^{5} $\\
&$+ 15 x^{4} + 2 x^{3} - 6 x^{2} + 2 x - 22)$\\\hline
$7$ & $2 x^{2} \left(x + 1\right) (2 x^{17} - 2 x^{16} + 2 x^{13} + 2 x^{12} - 4 x^{11} + 4 x^{10} - x^{9}$\\
&$ - x^{8} + 11 x^{7} - 7 x^{6} + 10 x^{5} + 14 x^{4} + 2 x^{3} - 13 x^{2} + 2 x - 30)$\\\hline
$8$ & $2 x^{2} \left(x + 1\right) (2 x^{24} - 2 x^{23} + 2 x^{22} - 2 x^{21} + 6 x^{17} - 4 x^{16} - 2 x^{15} +$\\
&$  4 x^{13} + 4 x^{12} - 7 x^{11} + 9 x^{10} - 3 x^{9} - x^{8} + 16 x^{7} + 10 x^{5} + 12 x^{4} + x^{3} - $\\
&$21 x^{2} + 3 x - 39)$\\\hline
\end{tabular}
\caption{Numerator of generating functions $\frac{p^a_n(x)}{(x - 1)^{3}(x + 1)}$ of $a^a_n(k)$.}\label{tbl:generate-acute}
\end{center}
\end{table}

\begin{table}[htbp]
\begin{center}
\begin{tabular}{|l|l|}
\hline
$n$ & $p^r_n(x)$ \\ \hline\hline
$2$ & $2 x \left(x - 1\right) \left(x + 1\right) \left(x + 2\right)$\\\hline
$3$ & $2 x \left(x - 1\right) \left(x + 1\right) \left(x^{3} + 2 x^{2} + 4 x + 5\right)$\\\hline
$4$ & $2 x \left(x - 1\right) \left(x + 1\right) \left(x^{5} + 2 x^{4} + 4 x^{3} + 6 x^{2} + 9 x + 8\right)$\\\hline
$5$ & $2 x \left(x - 1\right) \left(x + 1\right) \left(x^{7} + 2 x^{6} + 4 x^{5} + 6 x^{4} + 9 x^{3} + 12 x^{2} + 15 x + 11\right)$\\\hline
$6$ & $2 x \left(x - 1\right) \left(x + 1\right) (x^{9} + 2 x^{8} + 4 x^{7} + 6 x^{6} + 9 x^{5} + 12 x^{4} + 16 x^{3} + 20 x^{2}$\\
&$ + 21 x + 14)$\\\hline
$7$ & $2 x \left(x - 1\right) \left(x + 1\right) (x^{11} + 2 x^{10} + 4 x^{9} + 6 x^{8} + 9 x^{7} + 12 x^{6} + 16 x^{5} + 20 x^{4}$\\
&$ + 25 x^{3} + 29 x^{2} + 27 x + 17)$\\\hline
$8$ & $2 x \left(x - 1\right) \left(x + 1\right) (x^{13} + 2 x^{12} + 4 x^{11} + 6 x^{10} + 9 x^{9} + 12 x^{8} + 16 x^{7} + 20 x^{6}$\\
&$ + 25 x^{5} + 30 x^{4} + 36 x^{3} + 38 x^{2} + 33 x + 20)$\\\hline
\end{tabular}
\caption{Numerator of generating functions $\frac{p^r_n(x)}{(x - 1)^{3}(x + 1)}$ of $a^r_n(k)$.}\label{tbl:generate-right}
\end{center}
\end{table}

\section{Optimal value for $K(n)$}
Theorem \ref{thm:main3} shows that $(n-1)^2+3$ is the best value for $K(n)$ when $n$ is even.  Next we show that
$(n-1)^2+2$ is the best value for $K(n)$ when $n$ is odd.

Consider the case where $n>2$ is odd and $k = (n-1)^2$.  Then $k$ is even and by setting $y = \frac{k}{2}$, $w = \frac{k}{2}-1$, $x = 1$, $u = n-1$, we get $x^2+y^2 = u^2+w^2$ where $x\neq u$ and $y\neq w$ and $y+w = k-1$.
This corresponds to $4$ additional isosceles triangles in $B(k)$.  These triangles are right triangles for $n=3$ (Fig. \ref{fig:three}) and
obtuse for $n > 3$.

\begin{figure}[htbp]
\centerline{\includegraphics{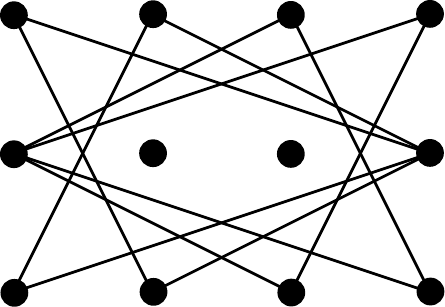}}
\caption{$4$ right isosceles triangles in $B(k)$ corresponding to $x=1$ for the case $k=4$, $n=3$.}\label{fig:three}
\end{figure}

\begin{theorem} \label{thm:main4}
Suppose that $n$ is odd and $ k = (n-1)^2$. Then
$a_n(k) = 2a_n(k-1)-a_n(k-2)  + \left\lfloor \frac{(n-1)^2}{2}\right\rfloor + 4$.  
\end{theorem}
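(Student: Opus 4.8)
The plan is to reduce everything to a single count at the borderline length $k=(n-1)^2$, reusing the machinery already set up for Theorem \ref{thm:main}. The column-decomposition identity $a_n(k)=2a_n(k-1)-a_n(k-2)+c_n(k)$ established there is purely combinatorial---it rests only on $b_n(k)=a_n(k)-a_n(k-1)$ and $b_n(k)=b_n(k-1)+c_n(k)$---and needs nothing more than $k>2$. Since $n$ is odd with $n\geq 3$, we have $k=(n-1)^2\geq 4$, so the identity applies unchanged. Thus the theorem is equivalent to the single assertion that $c_n(k)=\lfloor\frac{(n-1)^2}{2}\rfloor+4$ when $k=(n-1)^2$, and the work is to re-evaluate $c_n(k)=|A(k)|+|B(k)|$ at this length, where Lemma \ref{lem:two} can no longer be invoked.

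First I would dispose of $A(k)$. The classification of $A(k)$-triangles (two vertices in one outer column an even number of rows apart, and the third at the midpoint row of the opposite column) uses only the inequality $k>n$ to rule out a slanted edge equalling the short vertical edge; the acuteness that required $k>(n-1)^2+1$ is irrelevant to the raw count. Because $(n-1)^2>n$ for $n\geq 3$, Lemma \ref{lem:three} still gives $|A(k)|=\lfloor\frac{(n-1)^2}{2}\rfloor$, so all of the excess over Theorem \ref{thm:main2} must come from $B(k)$.

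The core of the argument is to recount $B(k)$ directly. Since $k=(n-1)^2$ is even, in Case 1 we have $y+w=k-1$ odd, hence $y>w$ strictly and the solutions $x=u$, $y=w$ that Theorem \ref{thm:main} relied on cannot occur; the generic contribution vanishes. I would then solve $x^2+y^2=u^2+w^2$ with $0\leq x,u\leq n-1$ and $y+w=k-1$ by rewriting it as $(y-w)(k-1)=u^2-x^2$. Using $k-1=(n-1)^2-1$ and $|u^2-x^2|\leq (n-1)^2$, the (odd, positive) difference $y-w$ is forced to be $1$, since $y-w\geq 2$ would demand $2((n-1)^2-1)\leq (n-1)^2$, impossible for $n\geq 3$. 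This fixes $y=\frac{k}{2}$, $w=\frac{k}{2}-1$ and leaves $u^2-x^2=(n-1)^2-1$, whose only in-range solution is $u=n-1$, $x=1$ (any $u\leq n-2$ makes the difference strictly smaller). Case 2, in which $|e_3|$ equals $|e_1|$ or $|e_2|$, is eliminated exactly as in the main proof by the length estimate $|e_3|\geq k-1$, which exceeds the largest possible length of $e_1$ or $e_2$ once $(n-1)^2>3$.

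Finally I would convert the unique solution $(x,y,u,w)=(1,\frac{k}{2},n-1,\frac{k}{2}-1)$ into a triangle count. The horizontal data place the middle vertex in column $\frac{k}{2}$ or column $\frac{k}{2}+1$ (the two images under the left--right reflection of the grid), and for each choice the vertical data force $\{r_2,r_3\}=\{1,n\}$ (from the gap $n-1$) with $r_1$ the unique neighbour of $r_2$ inside the grid (from the gap $1$), giving exactly two admissible row triples. Hence $|B(k)|=4$, so $c_n(k)=\lfloor\frac{(n-1)^2}{2}\rfloor+4$ and the recurrence follows. I expect the Case 1 Diophantine analysis to be the main obstacle: one must show that the borderline $k=(n-1)^2$ admits this single extra solution and that it contributes precisely four distinct non-degenerate triangles (as in Fig. \ref{fig:three}), taking care not to double count the mirror-symmetric placements or to miss collinear configurations.
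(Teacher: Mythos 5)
Your proof is correct and takes essentially the same route as the paper: the column decomposition yields $a_n(k)=2a_n(k-1)-a_n(k-2)+c_n(k)$, and at the borderline $k=(n-1)^2$ (even, since $n$ is odd) the equation $x^2+y^2=u^2+w^2$ with $y+w=k-1$ admits exactly the one extra solution $(x,y,u,w)=\left(1,\frac{k}{2},n-1,\frac{k}{2}-1\right)$, contributing the four triangles of Fig.~\ref{fig:three} so that $c_n(k)=\left\lfloor \frac{(n-1)^2}{2}\right\rfloor+4$. Your factorization $(y-w)(k-1)=u^2-x^2$ even makes fully explicit the uniqueness of this extra solution, which the paper leaves implicit in its appeal to the earlier Lemma~\ref{lem:one} analysis.
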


\begin{theorem} \label{thm:main4-obtuse}
Suppose that $n > 3$ is odd and $k = (n-1)^2$. Then $a^o_n(k) = 2a^o_n(k-1)-a^o_n(k-2)+4 $ .  
\end{theorem}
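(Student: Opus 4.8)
The plan is to re-run the column-decomposition argument of Theorem \ref{thm:main}, but restricted to obtuse isosceles triangles, and to evaluate the resulting correction term at the boundary value $k = (n-1)^2$. Because the decomposition into the first column $p_1$, the middle block $p_2$, and the last column $p_3$ respects the type of each triangle, the identity $a^o_n(k) = 2a^o_n(k-1) - a^o_n(k-2) + c^o_n(k)$ holds for all $k>2$, where $c^o_n(k)$ counts obtuse isosceles triangles having a vertex in $p_1$ and a vertex in $p_3$. As before I would write $c^o_n(k) = A^o(k) + B^o(k)$, according to whether the third vertex lies in $p_1\cup p_3$ (class $A$) or in $p_2$ (class $B$). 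The whole task then reduces to showing $A^o(k)=0$ and $B^o(k)=4$ for $k=(n-1)^2$.

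First I would dispose of class $A$. Since $n>3$ gives $k=(n-1)^2>n$, every triangle in $A(k)$ is of the midpoint form: two vertices one column separated by $2m$ rows, with the apex in the opposite column at the middle row, $1\le m\le\lfloor(n-1)/2\rfloor$. Such a triangle has equal sides of squared length $(k-1)^2+m^2$ and base $2m$, hence is obtuse exactly when $(2m)^2 > 2[(k-1)^2+m^2]$, i.e.\ $m>k-1$. Since $m\le (n-1)/2 < (n-1)^2-1 = k-1$, this never occurs, so all of class $A$ is acute and $A^o(k)=0$.

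Next, for class $B$ I would use the case analysis already set up in the optimal-value discussion: case 2 is impossible, and in case 1 the equation $x^2+y^2=u^2+w^2$ with $0\le x,u\le n-1$, $y+w=k-1$, $y\ge w$ is handled by cases on $y$. Lemma \ref{lem:one} (applied with $n-1$ in place of $n$) rules out every solution with $x,u>0$ and $y>k/2$; the value $y=k/2$ forces $u^2-x^2 = k-1 = n(n-2)$, whose only factorization into odd factors $(u-x,u+x)$ with $u+x\le 2n-2$ is $(n-2,n)$, giving $(x,u)=(1,n-1)$; and the borderline $x=0$ case fails because $n(n-2)$ lies strictly between $(n-2)^2$ and $(n-1)^2$ and so is not a perfect square. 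The single displacement pattern $(x,y,u,w)=(1,k/2,n-1,k/2-1)$ then yields exactly four triangles after reflecting left--right (which of $e_1,e_2$ carries the longer horizontal leg) and up--down (which extreme row hosts the endpoints of $e_2$), so $B(k)$ consists of precisely these four.

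Finally I would verify that these four triangles are obtuse, not right or acute, for $n>3$. Writing $m=n-1$, the base $e_3$ has squared length $(n-2)^2+(k-1)^2$ while each equal side has squared length $1+(k/2)^2$, and a short computation gives
\[ |e_3|^2 - 2|e_1|^2 = \tfrac{m}{2}(m-2)(m^2+2m+2), \]
which vanishes at $m=2$ (the right-triangle case $n=3$) and is strictly positive for $m>2$, i.e.\ for every odd $n>3$. Hence $B^o(k)=4$, so $c^o_n(k)=A^o(k)+B^o(k)=4$ and the stated recurrence follows; alternatively, one may read $c^o_n(k)=4$ directly off the proof of Theorem \ref{thm:main4} once $A(k)$ is known to be acute. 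The one genuinely delicate point is this last step: the inequality separating obtuse from right must be \emph{strict}, and $n=3$ sits exactly on the boundary, so the clean factorization above is what makes the sign transparent and explains why the hypothesis $n>3$ is needed.
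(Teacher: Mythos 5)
Your overall route is the same as the paper's: the paper proves Theorem \ref{thm:main4-obtuse} only implicitly, via the discussion preceding Theorem \ref{thm:main4} (exhibit the four extra triangles in $B(k)$ at $k=(n-1)^2$, note that $A(k)$ is acute and that the four are obtuse for $n>3$, then evaluate the correction term in $a^o_n(k)=2a^o_n(k-1)-a^o_n(k-2)+c^o_n(k)$). You actually supply more than the paper does: an explicit proof that the four triangles are the \emph{only} members of $B(k)$, and the obtuseness verification via $|e_3|^2-2|e_1|^2=\tfrac{m}{2}(m-2)(m^2+2m+2)$ with $m=n-1$, which I checked and which cleanly isolates the boundary case $n=3$ and justifies the hypothesis $n>3$. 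Your class-$A$ acuteness bound ($m\le (n-1)/2<k-1$) and the count of four via the two reflections are also correct.

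Two steps need repair, though both are one-line fixes. First, your uniqueness argument at $y=k/2$ is wrong as stated: you claim the only factorization of $n(n-2)$ into odd factors $(u-x,u+x)$ with $u+x\le 2n-2$ is $(n-2,n)$, but for $n=35$ one has $n(n-2)=1155=21\cdot 55$ with $55\le 68=2n-2$, giving the spurious pair $(u-x,u+x)=(21,55)$, i.e.\ $(x,u)=(17,38)$. What actually excludes such pairs is the constraint $u\le n-1$, which you stated in the setup but did not invoke: writing $a=u-x$, $b=u+x$, admissibility means $a+b=2u\le 2n-2$, and $a+n(n-2)/a\le 2n-2$ is equivalent to $\bigl(a-(n-1)\bigr)^2\le 1$; since $n-1$ does not divide $(n-1)^2-1$ for $n>2$, this forces $\{a,b\}=\{n-2,n\}$ and hence $(x,u)=(1,n-1)$ as the unique admissible solution. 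Second, Lemma \ref{lem:one} requires $x,u>0$, so your case split leaves the subcase $x=0$ with $y>k/2$ uncovered (your $x=0$ discussion treats only $y=k/2$). It is excluded in one line: $x=0$ gives $u^2=(y-w)(k-1)$, and $y>k/2$ with $y+w=k-1$ forces the odd difference $y-w\ge 3$, so $u^2\ge 3(k-1)>k=(n-1)^2\ge u^2$, a contradiction (and $u=0$ would need $w\ge y$, contradicting $y\ge w$). With these two repairs your proof is complete and, if anything, more rigorous than the paper's own treatment, which asserts rather than proves both the exhaustiveness of the four triangles and their obtuseness.
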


This means that $K(n) = (n-1)^2+3$ for $n$ is even and $K(n) = (n-1)^2+2$ for $n$ is odd are the best possible values for $K(n)$ in the conjecture in Section \ref{sec:intro} as expressed in Theorems \ref{thm:main} and \ref{thm:main-even}.  They are also optimal when restricted to the set of obtuse isosceles and $n>3$.
In addition, 
$a_n(k) = 2a_n(k-1)-2a_n(k-3)+a_n(k-4)-4$ if $k = (n-1)^2+3$ and $n$ is even or if $k=(n-1)^2+2$ and $n$ is odd.  This is due to the fact that for these values of $n$ and $k$, $c_n(k-2) = c_n(k)+4$.

\section{Subsets of points on a regular grid containing the vertices of isosceles triangles}
Consider an $n$ by $k$ regular grid.  Let $S(n,k) = r$ be defined as the smallest number $r$ such that any $r$ points in the grid contains the vertices of an isosceles triangle of nonzero area.  This is equivalent to finding the constellation with the largest number of points $T(n,k)$ such that no three points form an isosceles triangle as $S(n,k) = T(n,k)+1$.  The values of $T(n,k)$ for small $n$ and $k$ are listed in OEIS \href{http://oeis.org/A271914}{A271914} where it was conjectured that $T(n,k) \leq n+k -1$. The next Lemma shows that this conjecture is true for $k=1,2$.

\begin{lemma} \label{lem:tnk}
$T(n,k)$ satisfies the following properties:
\begin{enumerate}
\item $T(n,k) = T(k,n)$. If $m\leq n$, then $T(m,k)\leq T(n,k)$.
\item $ T(n,k) \geq \max(n,k)$, 
\item $T(n,1) = n$,
\item (subadditivity) $ T(n,k+m) \leq T(n,k) + T(n,m)$,
\item $ T(n,2) = n $ for $ n > 3$,
\item Suppose $n > 4$. Then $T(n,3) \geq n+1$ if $n$ is odd and $T(n,3) \geq n  + 2$ if $n$ is even.
\end{enumerate}
\end{lemma}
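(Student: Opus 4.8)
The plan is to dispose of the structural facts first. For the symmetry $T(n,k)=T(k,n)$ I would observe that reflection across the main diagonal, $(x,y)\mapsto(y,x)$, is an isometry carrying the $n\times k$ grid bijectively onto the $k\times n$ grid; since it preserves all distances it preserves the property of being isosceles-free, so extremal sets correspond. For monotonicity, an $m\times k$ grid with $m\le n$ sits inside the $n\times k$ grid as a sub-array with the same embedding, so any isosceles-free set downstairs is isosceles-free upstairs, giving $T(m,k)\le T(n,k)$. For (2), a single full row or column is a set of $\max(n,k)$ (resp. the other dimension) collinear points spanning no nonzero-area triangle, so $T(n,k)\ge\max(n,k)$; (3) is then immediate since the $n\times 1$ grid has exactly $n$ points, all collinear, forcing $T(n,1)=n$. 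For subadditivity (4), split the $n\times(k+m)$ grid into its first $k$ and last $m$ columns and restrict: an isosceles-free set restricts to isosceles-free sets in each block, so its size is at most $T(n,k)+T(n,m)$.

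\textbf{Property 5.} The lower bound is the full column from (2); the content is $T(n,2)\le n$ for $n>3$. First I would classify the isosceles triangles in an $n\times 2$ grid: any three chosen points contain two in one column, so writing the vertices as $(0,p_1),(0,p_2),(1,q)$ and comparing the three side lengths shows only two possibilities occur, namely (I) a \emph{midpoint} triangle with $p_1\equiv p_2\pmod 2$ and apex at height $\tfrac{p_1+p_2}{2}$ in the opposite column, and (II) a unit right-isosceles triangle (three corners of a unit square), the integer equation $u^2-v^2=1$ admitting only the degenerate solution. Encoding a candidate set $S$ by its row occupancies $s_i\in\{0,1,2\}$ and letting $Z,O,E$ be the rows carrying two, one, and zero points, one gets $|S|=2|Z|+|O|=n+|Z|-|E|$, so the claim reduces to $|Z|\le|E|$.

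Type (II) forces every ``both'' row to have empty vertical neighbors, so $Z$ is independent with neighborhood contained in $E$. I would then exhibit an injection $\phi\colon Z\to E$ by $z\mapsto z+1$, with $n-1\mapsto n-2$ when the top row lies in $Z$. The sole way injectivity can fail is the boundary collision $n-3,n-1\in Z$, both targeting $n-2$, which I resolve by remapping $n-3\mapsto n-4$. This is precisely where $n>3$ and type (I) enter: $n-4$ is a legal empty target (it exists because $n>3$ and is empty because $n-3\in Z$), and it is unused because $n-5\in Z$ alongside $n-3,n-1\in Z$ would place three both-rows in arithmetic progression and hence produce a type-(I) triangle. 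This yields $|Z|\le|E|$, so $T(n,2)\le n$. I expect this injection-with-boundary-repair to be the main obstacle of the whole lemma; I would also record that for $n=3$ the remap $0\mapsto-1$ is illegal, matching the genuine exception $T(3,2)=4$ realized by the four corners of a $1\times 2$ rectangle.

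\textbf{Property 6.} These are lower bounds, so I would exhibit explicit isosceles-free configurations in the $n\times 3$ grid of sizes $n+1$ for odd $n$ and $n+2$ for even $n$, given by a pattern depending only on $n$ and its parity, and verify directly that no chosen triple is isosceles. The verification follows the spirit of Part 5 but must additionally account for triangles spanning two and three distinct columns; the parity split is a reflection of the same even-gap/midpoint phenomenon that produced the exception in Part 5. Since this step is computational rather than conceptual, I would present the constructions and check the finitely many triangle shapes that can arise, treating the two parities separately.
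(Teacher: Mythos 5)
Your properties 1--4 match the paper's proof essentially verbatim in content (reflection symmetry, sub-grid monotonicity, a single collinear row or column for the lower bounds, and the column split for subadditivity), but your property 5 is correct by a genuinely different route. The paper disposes of property 5 in one line: by subadditivity, $T(n+2,2)\le T(n,2)+T(2,2)=T(n,2)+2$, which together with $T(n,2)\ge n$ from property 2 (and the implicit finite base cases $T(4,2)=4$, $T(5,2)=5$) gives equality by induction in steps of two. You instead classify the isosceles triangles in an $n\times 2$ grid directly (midpoint triangles, and unit-square corners via the equation $u^2-v^2=1$), encode a candidate set by row occupancies, and build an injection from the doubly-occupied rows $Z$ into the empty rows $E$, with the boundary repair $n-3\mapsto n-4$ justified by the observation that three $Z$-rows in arithmetic progression would force a midpoint triangle. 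I checked the details --- the emptiness of vertical neighbors of $Z$-rows forced by type (II), the injectivity analysis, and the two places where $n>3$ enters --- and they are sound. Your argument is longer than the paper's, but it is self-contained (no unstated base-case verification) and it explains structurally why the threshold is $n>3$ and why $T(3,2)=4$ is the genuine exception, which the paper's induction leaves opaque.

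Property 6, however, has a genuine gap: the statement is a lower bound, so the entire mathematical content is the witness configuration, and you never exhibit one --- you only promise ``a pattern depending only on $n$ and its parity'' and defer the verification as ``computational rather than conceptual.'' Promising a construction is not a proof of existence. For comparison, the paper's witnesses are explicit: for odd $n>4$, the points $\{(1,i): 2\le i\le n\}$ together with $(2,1)$ and $(3,1)$, giving $n+1$ points; for even $n>4$, the points $\{(1,i): 2\le i\le n-1\}$ together with $(2,1)$, $(3,1)$, $(2,n)$ and $(3,n)$, giving $n+2$ points. Note also that the verification is not a routine repeat of your part-5 analysis: triples may now span two or three columns, and both the parity split and the hypothesis $n>4$ bite precisely in that check. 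Until you write down a concrete configuration and carry out that isosceles-freeness verification, part 6 remains unproved in your write-up.
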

\begin{proof}
Property 1 is trivial. Properties 2 and 3 are clear, by considering points all in one row (or column) only. To show property $4$, suppose $T(n,k) + T(n,m) + 1$ points are chosen in an $n$ by $k+m$ grid, and consider a decomposition into an $n$ by $k$ grid and an $n$ by $m$ grid. Either the $n$ by $k$ grid has more than $ T(n,k)$ points or the $n$ by 
$m$ grid has more than $T(n,m)$ points.  In either case, there is an isosceles triangle.  
 Property $4$ implies that $T(n+2,2) \leq T(n,2) + T(2,2) = T(n,2) + 2$ and property $2$ implies $T(n, 2) \geq n$ for $ n \geq 2$. Thus property $5$ follows from properties $2$ and $4$.
Suppose $n > 4$. For $n$ odd, the points $\{(1,i): 2\leq i\leq n\}$ along with the points $(2,1)$ and $(3,1)$ shows that $T(n,3)\geq n+1$ (Fig. \ref{fig:odd3}).
If $n$ is even, the points   $\{(1,i): 2\leq i\leq n-1\}$ along with the points $(2,1)$, $(3,1)$, $(2,n)$ and $(3,n)$ shows that $T(n,3) \geq n+2$ (Fig. \ref{fig:even3}).
\end{proof}

\begin{figure}[htbp]
\centerline{\includegraphics{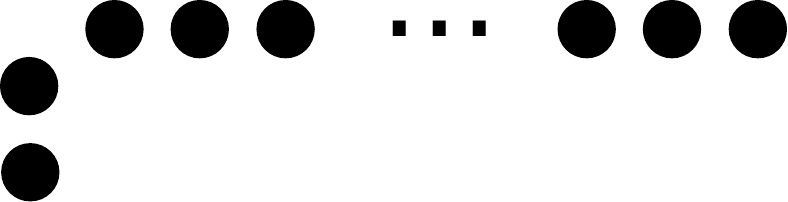}}
\caption{A constellation of $n+1$ points for which there are no $3$ points forming an isosceles triangle for the case when $n > 4$ is odd.}\label{fig:odd3}
\end{figure}

\begin{figure}[htbp]
\centerline{\includegraphics{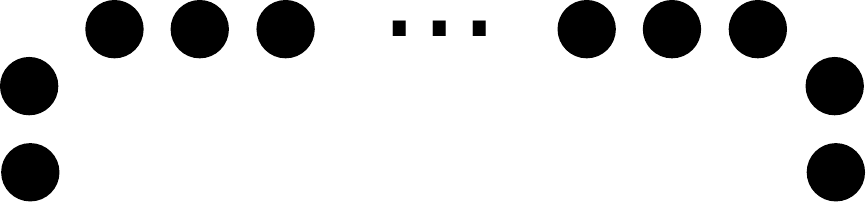}}
\caption{A constellation of $n+2$ points for which there are no $3$ points forming an isosceles triangle for the case when $n > 4$ is even.}\label{fig:even3}
\end{figure}

Note that for $n$ odd, the constellation in Fig. \ref{fig:even3} for $n-1$ which has $n-1+2 = n+1$ points will also show that $T(n,3) \geq n+1$. We conjecture the following:
\begin{conjecture} \label{conj:one}
If $n$ is even, then $T(n,k) \leq n+k-2$ for $k \geq 2n$.
\end{conjecture}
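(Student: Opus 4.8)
The plan is to argue by contradiction: fix an even $n$ and $k \geq 2n$, suppose $S$ is a set of $n+k-1$ grid points (one more than the claimed bound) containing no isosceles triangle of nonzero area, and derive a contradiction. I would exploit two features special to this regime. First, because $k \geq 2n$ the grid is very wide, so horizontal extent dominates: by the same mechanism as Lemmas \ref{lem:one} and \ref{lem:two}, an isosceles triangle whose equal legs span a large horizontal distance is forced into a rigid symmetric form (equal horizontal displacements about an apex). This should let me discard almost all ``skew'' isosceles triangles and reduce the obstruction set to axis-aligned configurations. Second, the parity of $n$ is where the improvement from $n+k-1$ to $n+k-2$ comes from: when $n$ is even there is no central row, removing one degree of freedom from the symmetric constructions, which is exactly what costs the extremal set one point.

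The combinatorial core I would set up is the midpoint mechanism. If two chosen points lie in a common row at columns $j_1,j_2$, then every point in column $(j_1+j_2)/2$ (when this is an integer) is equidistant from both and completes an isosceles triangle; symmetrically for two points in a common column and the midpoint row. Writing $r_i$ for the number of chosen points in row $i$ and $c_j$ for the number in column $j$, we have $\sum_i r_i = \sum_j c_j = n+k-1$, and since there are only $n$ rows while $k \geq 2n$, each row is on average at least about three full, as $(n+k-1)/n = 1+(k-1)/n \geq 3 - 1/n$. I would then show that a full row forbids many midpoint columns and play these forbidden columns off against the total occupancy $n+k-1$ through a weighted double count over row--column incidences. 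The wide-grid reduction of the previous paragraph is what guarantees that the only isosceles triangles I must track are these axis-aligned midpoint ones, so that the counting is exhaustive rather than merely a lower bound on obstructions.

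For the inductive backbone I would try to establish a one-column peeling estimate valid in the wide regime, of the form $T(n,k) \leq T(n,k-1)+1$ for $k > 2n$, together with a base estimate $T(n,2n) \leq 3n-2$; chaining these yields $T(n,k) \leq (3n-2)+(k-2n) = n+k-2$. The peeling step cannot be the naive ``delete the last column,'' since an extremal triangle-free set may concentrate several points in one column. Instead I would delete the last column and argue that if it carried two or more chosen points, the symmetric and midpoint constraints it imposes on the remaining $n \times (k-1)$ block free up at least as many forbidden positions as the points removed, giving a net increase of at most one. The subadditivity and small-case values of Lemma \ref{lem:tnk} (in particular $T(n,2)=n$ for $n>3$ and the explicit $T(n,3)$ bounds) would seed and sanity-check the base case.

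The hard part, and the reason this remains a conjecture, is precisely this peeling/interaction step: bounding the number of points a single column can carry in an extremal set requires understanding how an almost-extremal triangle-free configuration on $n \times (k-1)$ can be extended, and controlling the non-axis-aligned isosceles triangles uniformly enough to make the ``forbidden positions outnumber added points'' accounting rigorous. I expect the even-$n$ hypothesis to enter exactly here, ruling out the symmetric extensions that would be available for odd $n$ through the central row, which is what sharpens the bound by one. Establishing a clean, fully general form of this local exchange inequality — rather than verifying it case by case for small $n$ — is the principal obstacle.
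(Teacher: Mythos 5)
This statement is Conjecture \ref{conj:one}: the paper offers no proof of it at all, only the remark that it holds for $n=2$ via Lemma \ref{lem:tnk} (property 5 gives $T(2,k)=T(k,2)=k=n+k-2$ for $k\geq 4$). Your submission is, by your own closing admission, a research plan rather than a proof: the peeling inequality $T(n,k)\leq T(n,k-1)+1$ for $k>2n$ and the base estimate $T(n,2n)\leq 3n-2$, which together constitute the entire argument, are both left unproved, and each is essentially as strong as the conjecture itself. The only tool the paper supplies in this direction, subadditivity (Lemma \ref{lem:tnk}, property 4), yields only $T(n,k)\leq T(n,k-1)+T(n,1)=T(n,k-1)+n$, far too weak to chain into the claimed bound. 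So there is a genuine gap, and it sits exactly where a proof would have to sit.

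Beyond the admitted incompleteness, two of your supporting claims fail in substance. First, the ``wide-grid reduction'' misapplies Lemmas \ref{lem:one} and \ref{lem:two}: those lemmas force rigidity only when a horizontal displacement exceeds $n^2/2$, which is why the paper invokes them under the quadratic hypothesis $k>(n-1)^2+1$, where an edge spanning the grid is automatically that long. The hypothesis $k\geq 2n$ is linear in $n$ and gives no such control: a putative triangle-free set of $n+k-1$ points will typically contain many pairs at horizontal distance well below $n^2/2$, so skew isosceles triangles (legs of type $(3,4)$ versus $(5,0)$, say) are not excluded, and your midpoint double count is then not exhaustive. Restricting attention to axis-aligned midpoint triangles would instead require proving that every set of $n+k-1$ points contains a midpoint triangle specifically --- a claim you never argue and which does not follow from the occupancy average $(n+k-1)/n\geq 3-1/n$. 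Second, the parity mechanism is misidentified: the midpoint obstructions depend on the parity of $j_1+j_2$ for same-row pairs (and of row separations for same-column pairs), not on the absence of a central grid row; indeed property 6 of Lemma \ref{lem:tnk} shows that for even $n$ the extremal count is \emph{larger} at $k=3$, namely $T(n,3)\geq n+2$, so the parity of $n$ initially works against the bound you want, and any correct proof must explain why this reverses by $k\geq 2n$. Your sketch does not engage with that reversal, which is the actual content of the conjecture.
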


\begin{conjecture}
For $n>4$, $T(n,3) = n+1$ if $n$ is odd and $T(n,3) = n+2$ if $n$ is even. 
\end{conjecture}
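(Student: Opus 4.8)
The lower bounds $T(n,3)\ge n+1$ (for odd $n$) and $T(n,3)\ge n+2$ (for even $n$) are exactly property~6 of Lemma~\ref{lem:tnk}, so the plan is to prove the matching upper bounds, which is the substance of the conjecture. Arguing the contrapositive, I would bound the size of an arbitrary isosceles-free set $S$ and show $|S|\le n+1$ for odd $n$ and $|S|\le n+2$ for even $n$, from which $S(n,3)$ and hence $T(n,3)$ follow.

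First I would set up a local characterization. Writing the grid as three columns $C_1,C_2,C_3$ at $x\in\{1,2,3\}$ with rows $y\in\{1,\dots,n\}$, a set is isosceles-free precisely when no chosen point $P$ is equidistant from two other chosen points not collinear with $P$. Since the squared distance from $P=(x_0,y_0)$ to any grid point has the form $\delta+t^2$ with $\delta=(x-x_0)^2\in\{0,1,4\}$ and $t=|y-y_0|$, two points are equidistant from $P$ only in the finitely many ways $\delta_1+t_1^2=\delta_2+t_2^2$. Solving this over $\delta_i\in\{0,1,4\}$ yields a short explicit list of local patterns: the midpoint triangles (apex $P$ over a vertical base in another column, at its midpoint row), the shared-row triangles (a point of $C_2$ as apex over a point of $C_1$ and a point of $C_3$ lying in a common row, since $C_2$ sits on their perpendicular bisector $x=2$), and three short cross coincidences coming from $t_1^2-t_2^2\in\{1,3,4\}$, the most notable being the diagonal $1^2+2^2=2^2+1^2=5$. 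Reducing isosceles-freeness to avoidance of this finite list, together with the isometries $x\mapsto 4-x$ and $y\mapsto n+1-y$, is the workhorse of the argument.

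With this in hand I would run a case analysis on the column occupancies $a=|S\cap C_1|$, $b=|S\cap C_2|$, $c=|S\cap C_3|$, taking $a\ge c$ without loss of generality. The midpoint pattern supplies the key constraint: any two points in one column whose rows have even sum forbid the midpoint row in the other two columns, so each column's occupied-row set behaves like a Golomb-ruler-type set, with the middle column coupled to both outer ones. The shared-row pattern handles coincidences across $C_1$ and $C_3$: if both meet some row $r$, then $(1,r),(3,r)$ together with any $C_2$ point off row $r$ is isosceles, forcing $C_2$ to contain at most the single point $(2,r)$. Splitting on the size of $b$ and summing the per-column packing bounds should give $a+b+c\le n+1$ or $n+2$, with the parity of $n$ entering through whether a vertical pair spanning the full height has integer midpoint: for even $n$ the span $n-1$ is odd, so the extremal pairs at rows $1$ and $n$ create no forbidden interior row, which is exactly the source of the one extra point and reproduces the maximizers of Lemma~\ref{lem:tnk} as the (essentially unique) optimal configurations.

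The hardest part is not any single mechanism but the interaction of the cross coincidences with the midpoint constraints: the diagonal $5=1^2+2^2=2^2+1^2$ ties a point of $C_1$ to a $C_2$ point two rows away and a $C_3$ point one row away, so the three columns cannot be treated as independent rulers and a naive packing bound overshoots the target by a constant. Pinning down exactly how these short-range cross constraints remove that constant, uniformly in $n>4$ and with the precise $n+1$ versus $n+2$ split rather than merely an asymptotic $n+O(1)$ bound, is the crux of the problem and is presumably why the statement is recorded here only as a conjecture.
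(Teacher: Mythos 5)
This statement is a \emph{conjecture} in the paper: the paper proves only the lower bounds $T(n,3)\geq n+1$ ($n$ odd) and $T(n,3)\geq n+2$ ($n$ even), via the explicit constellations of property~6 of Lemma~\ref{lem:tnk} (Figs.~\ref{fig:odd3} and \ref{fig:even3}), and offers no argument for the matching upper bounds. You correctly identify this structure --- reusing the paper's lower bound and isolating the upper bound as the real content --- but your proposal does not close the gap either; by your own admission in the final paragraph, the crux remains unresolved. So what you have is a program, not a proof, and it cannot be credited as settling the statement.

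Beyond the admitted incompleteness, there is a concrete flaw in the one quantitative mechanism you do propose. Your reduction of isosceles-freeness to a finite list of local equidistance patterns is sound (with $\delta\in\{0,1,4\}$ and $t_2^2-t_1^2\in\{0,\pm 1,\pm 3,\pm 4\}$ there are indeed only finitely many solution types), and the shared-row observation about $C_2$ is correct. But the claim that ``each column's occupied-row set behaves like a Golomb-ruler-type set'' points the packing argument in the wrong direction: the midpoint constraint generated by a vertical pair forbids a row in the \emph{other two} columns, not in its own column, and a fully occupied single column is itself isosceles-free (all triples are collinear). Hence no nontrivial per-column packing bound exists, and summing per-column bounds cannot yield $a+b+c\leq n+2$. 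Indeed the extremal configurations of Lemma~\ref{lem:tnk} are maximally skewed ($n-1$ or $n-2$ points in one column plus two or four outliers), so the whole difficulty lives in the cross-column interaction: one must show that $b+c$ points outside the dominant column generate, through the midpoint, shared-row, and short diagonal coincidences such as $1^2+2^2=2^2+0^2+\text{(column shift)}$, enough distinct forbidden rows to cap the total at exactly $n+1$ or $n+2$, with the parity split you describe. That counting --- in particular controlling overlaps among forbidden rows in the intermediate regimes where no column dominates --- is never attempted, which is precisely why the paper records the statement as a conjecture rather than a theorem.
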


Note that by Lemma \ref{lem:tnk} Conjecture \ref{conj:one} is true for $n = 2$.

\end{document}